\documentclass[a4paper,10pt]{amsproc}

\usepackage{amsfonts, amsmath, amssymb, graphicx, mathrsfs}
\usepackage[all]{xy}

\newdir{ >}{!/6pt/@{ }*:(1,0.2)@_{>}*:(1,-0.2)@^{>}}

\theoremstyle{plain}

\newtheorem{theorem}{Theorem}[section]

\theoremstyle{definition}

\newtheorem{remark}[theorem]{Remark}
\newtheorem{counter example}[theorem]{Counter Example}

\newtheorem{corollary}[theorem]{Corollary}
\newtheorem{example}[theorem]{Example}
\numberwithin{equation}{section}

\DeclareMathOperator{\cl}{cl}

\begin{document}

\title[Essential Ideals]{Some new results on functions in $C(X)$ having their support on ideals of closed sets}

\author[S. Bag]{Sagarmoy Bag}
\address{Department of Pure Mathematics, University of Calcutta, 35, Ballygunge Circular Road, Kolkata 700019, West Bengal, India}
\email{sagarmoy.bag01@gmail.com}
\author[S.K.Acharyya]{Sudip Kumar Acharyya}
\address{Department of Pure Mathematics, University of Calcutta, 35, Ballygunge Circular Road, Kolkata 700019, West Bengal, India}
\email{sdpacharyya@gmail.com}
\thanks{The first author thanks the NBHM, Mumbai-400 001, India, for financial support}

\author[P.Rooj]{Pritam Rooj}
\address{NIIT University, Neemrana, Rajasthan, PIN 301705, India }
\email{pritam.rooj@niituniversity.in}

\author[G. Bhunia]{Goutam Bhunia}
\address{Department of Pure Mathematics, University of Calcutta, 35, Ballygunge Circular Road, Kolkata 700019, West Bengal, India}
\email{bhunia.goutam72@gmail.com}
\thanks{The fourth author thanks the NBHM, Mumbai-400 001, India, for financial support}

\subjclass[2010]{Primary 54C40; Secondary 46E25}

\keywords{Compact support, pseudocompact space, intermediate ring,                                                                                                                                                                                                                                                                                                                                                                                                                                                                                                                                                                                                                                                                                                                                                                                                                                                                                                                                                                                                                                                                                                                                                                                                                                                                                                                                                                                                                                                                                                                                                                                                                                                                                                                                                                                                                                                                                                                                                                                                                                                                                                                                                                                                                                                                                                                                                                                                                                                                                                                                                                                                                                                                                                                                                                                                                                                                                                                                                                                                                                                                                                                                                                                                                                                                                                                                                                                                                                                                                                                                                                                                                                                                                                                                                                                                                                                                                                                                                                                                                                                                                                                                                                                                                                                                                                                                                                                                                                                                                                                                    pseudocompact support, essential ideal, $z^\circ$-ideal, socle, $C$-type ring.}
\thanks {}

\maketitle
\begin{abstract}
	For any ideal $\mathcal{P}$ of closed sets in $X$, let $C_\mathcal{P}(X)$ be the family of those functions in $C(X)$ whose support lie on $\mathcal{P}$. Further let $C^\mathcal{P}_\infty(X)$ contain precisely those functions $f$ in $C(X)$ for which for each $\epsilon >0, \{x\in X: \lvert f(x)\rvert\geq \epsilon\}$ is a member of $\mathcal{P}$. Let $\upsilon_{C_{\mathcal{P}}}X$ stand for the set of all those points $p$ in $\beta X$ at which the  stone extension $f^*$ for each $f$ in $C_\mathcal{P}(X)$ is real valued. We show that each realcompact space lying between $X$ and $\beta X$ is of the form $\upsilon_{C_\mathcal{P}}X$ if and only if $X$ is pseudocompact. We find out conditions under which an arbitrary product of spaces of the form locally-$\mathcal{P}/$ almost locally-$\mathcal{P}$, becomes a space of the same form. We further show that $C_\mathcal{P}(X)$ is a free ideal ( essential ideal ) of $C(X)$ if and only if $C^\mathcal{P}_\infty(X)$ is a free ideal ( respectively essential ideal ) of $C^*(X)+C^\mathcal{P}_\infty(X)$ when and only when $X$ is locally-$\mathcal{P}$ ( almost locally-$\mathcal{P}$). We address the problem, when does $C_\mathcal{P}(X)/C^\mathcal{P}_{\infty}(X)$ become identical to the socle of the ring $C(X)$. Finally we observe that the ideals of the form $C_\mathcal{P}(X)$ of $C(X)$ are no other than the $z^\circ$-ideals of $C(X)$. 
\end{abstract}	

\section{Introduction}
In what follows $C(X)$ stands for the ring of all real valued continuous functions on a completely regular Hausdorff topological space $X$. $C^*(X)$, as usual denotes the subring of $C(X)$ containing those functions, which are bounded over $X$. Let $\mathcal{P}$ be a family of closed subsets of $X$ satisfying the following two conditions: 
\begin{enumerate}
	\item If $A, B\in \mathcal{P}$ then $A\cup B\in\mathcal{P}$.
	\item If $A\in\mathcal{P}$ and $B\subseteq A$ with $B$ closed in $X$ then $B\in \mathcal{P}$
\end{enumerate}
  i.e. $\mathcal{P}$ is an ideal of closed sets in $X$. For any $f\in C(X)$,  $Z(f)=\{x\in X:f(x)=0\}$ stands for the zero set of $f$. Let $\Omega (X)$ denote the set of all ideals of closed sets in $X$. For each $\mathcal{P}\in\Omega (X)$ set $C_{\mathcal{P}}(X)=\{f\in C(X):\cl_X(X-Z(f))\in \mathcal{P}\}$ and $C^{\mathcal{P}}_{\infty}(X)=\{ f\in C(X): \{x\in X:\lvert f(x)\rvert\geq \epsilon\}\in \mathcal{P}\text{ for each }\epsilon >0\}$. If $\mathcal{P}$ is chosen to be the family of compact subsets of $X$, then $C_{\mathcal{P}}(X)$ and $C^{\mathcal{P}}_{\infty}(X)$ coincide with the ring $C_K(X)$ of real valued continuous functions on $X$ with compact support and the ring $C_{\infty}(X)$ of real valued continuous functions which vanish at infinity respectively. These two rings $C_{\mathcal{P}}(X)$ and $C_{\mathcal{P}}^\infty(X)$ have been introduced first time in 2010 in \cite{AG}. However the same pair of rings have been reintroduced by following some dual techniques in 2015 in \cite{T}.
  
  As usual $\beta X$ stands for the Stone-$\check{C}$ech compactification of $X$. For any $f\in C(X), f^*:\beta X\mapsto \mathbb{R}\cup \{\infty\}$ is its unique continuous extension over $\beta X$ into the one-point compactification of $\mathbb{R}$. For $\mathcal{P}\in \Omega (X)$, let $\upsilon_{C_{\mathcal{P}}} X=\{ p\in\beta X:f^*(p)\in\mathbb{R}\text{ for all } f\in C_{\mathcal{P}}(X)\}$. It is easy to see that $\upsilon_{C_{\mathcal{P}}}X$ is a real compact space lying between $X$ and $\beta X$. The first main theorem of this article states that each real compact space between $X$ and $\beta X$ could be achieved as a space of the form $\upsilon_{C_{\mathcal{P}}}X$ for some $\mathcal{P}\in \Omega (X)$ if and only if $X$ is pseudocompact (Theorem \ref{3}).
  
  For any $\mathcal{P}\in\Omega (X)$, we call $X$ locally-$\mathcal{P}$ if each point $x$ of $X$ has an open neighbourhood whose closure in $X$ belongs to $\mathcal{P}$. More generally $X$ is called almost locally-$\mathcal{P}$ if each nonempty open set $U$ in $X$ contains a nonempty open set $V$ such that $\cl_X V$ is a member of $\mathcal{P}$. If $\mathcal{P}$ is the ideal of all compact sets in $X$, then locally-$\mathcal{P}$ spaces and almost locally-$\mathcal{P}$ spaces bog down to locally compact spaces and almost locally compact spaces respectively. The later class of spaces is introduced in \cite{A}. We have introduced the notion product of ideals of closed sets in the following manner: If $\{X_{\alpha} :\alpha\in\Lambda \}$ is an indexed family of topological spaces with $\mathcal{P}_{\alpha}\in\Omega(X_{\alpha})$ for each $\alpha\in\Lambda$, then $\prod_{\alpha\in\Lambda}\mathcal{P}_{\alpha}$ is defined to be the smallest member of $\Omega (\prod_{\alpha\in\Lambda}X_{\alpha})$ containing sets of the form $\prod_{\alpha\in\Lambda}A_{\alpha}$, where $A_{\alpha}$ belongs to $\mathcal{P}_{\alpha}$ for each $\alpha$ in $\Lambda$. We have shown that the product space $\prod_{\alpha\in \Lambda}X_{\alpha}$ is locally-$\prod_{\alpha\in\Lambda}\mathcal{P}_{\alpha}$ if and only if $X_{\alpha}$ is locally-$\mathcal{P}_{\alpha}$ for each $\alpha\in\Lambda$ and $X_{\alpha}\in\mathcal{P}_{\alpha}$ for all but finitely many $\alpha$'s in $\Lambda$. We have further checked that $\prod_{\alpha\in\Lambda}X_{\alpha}$ is almost locally-$\prod_{\alpha\in\Lambda}\mathcal{P}_{\alpha}$ when and only when $X_{\alpha}$ is almost locally-$\mathcal{P}_{\alpha}$ for each $\alpha$'s in $\Lambda$ and $X_\alpha\in\mathcal{P}_\alpha$ for all but finitely many $\alpha$'s in$\lambda$. For the particular choice of $\mathcal{P}_{\alpha}$ as the ideal of all compact sets in $X_{\alpha}$ for each $\alpha$ in $\Lambda$, the first result reads: $\prod_{\alpha\in \Lambda}X_{\alpha}$ is locally compact if and only if each $X_{\alpha}$ is locally compact and all but finitely many $X_{\alpha}$'s are compact, a well known result in general topology. For the same choice of $\mathcal{P}_{\alpha}$'s, the second result says that $\prod_{\alpha\in\Lambda}X_{\alpha}$ is almost locally compact if and only if $X_{\alpha}$ is almost locally compact for each $\alpha$ in $\Lambda$ and $X_{\alpha}$'s are compact for all but finitely many $\alpha$'s in $\Lambda$. This later fact is established in \cite{A}. Furthermore on choosing $\mathcal{P}_{\alpha}$ as ideal of closed bounded sets in $X_{\alpha}$we have established analogous results involving locally pseudocompact spaces and almost locally pseudocompact spaces. It is easy to check that $C_{\mathcal{P}}(X)\subseteq C^\mathcal{P}_{\infty}(X)$ with $C_{\mathcal{P}}(X)$ an ideal of the ring $C(X)$ and $C^{\mathcal{P}}_{\infty}(X)$ a subring of $C(X)$. For some specific choice of $\mathcal{P}$, say $\mathcal{P}\equiv$ the ideal of all compact sets in $X\equiv K$ say, $C^{\mathcal{P}}_{\infty}(X)=C_{\infty}(X)\subseteq C^*(X)$. In general however $C^{\mathcal{P}}_{\infty}(X)$ may not be contained in $C^*(X)$, this is easily seen on taking $X=\mathbb{R}$ and $\mathcal{P}$, the ideal of all closed Lindel\"{o}f subsets of $X$. But if $f\in C^{\mathcal{P}}_{\infty}(X)$ and $g\in C^*(X)$, then it follows that $fg\in C^{\mathcal{P}}_{\infty}(X)$. Hence $C^{\mathcal{P}}_{\infty}(X)$ is an ideal of the ring $C^{\mathcal{P}}_{\infty}(X)+C^*(X)\equiv \{f+g: f\in C^{\mathcal{P}}_{\infty}(X), g\in C^*(X)\}$. We have shown that $C_{\mathcal{P}}(X)$ is a free ideal of $C(X)$ if and only if $C^{\mathcal{P}}_{\infty}(X)$ is a free ideal of $C^{\mathcal{P}}_{\infty}(X)+C^*(X)$ if and only if $X$ is locally-$\mathcal{P}$. We recall that an ideal $I$ in $C(X)$ is free if $\cap_{f\in I}Z(f)=\phi$. We have established that for $\mathcal{P}\in \Omega (X)$, $C_{\mathcal{P}}(X)$ is an essential ideal of $C(X)$ when and only when $C^{\mathcal{P}}_{\infty}(X)$ is an essential ideal of $C^{\mathcal{P}}_{\infty}(X)+C^*(X)$ and this is precisely the case if and only if $X$ is almost locally-$\mathcal{P}$. With $\mathcal{P}\equiv K$, this reads: $C_K(X)$ is an essential ideal of $C(X)$ if and only if $C_{\infty}(X)$ is an essential ideal of $C^*(X)$ when and only when $X$ is almost locally compact. This last result was established in \cite{A}. Incidentally a nonzero ideal $I$ in a commutative ring $R$ (with or without identity) is called an essential ideal if it intersects each nonzero ideal in $R$ nontrivially, this means that for each $a\neq 0$ in $R$, there exists $b\neq 0$ in $R$ such that $ab\neq 0$ and $ab\in I$. We have recorded a second special case of our general result involving $C_{\mathcal{P}}(X)$ and $C^{\mathcal{P}}_{\infty}(X)$, mentioned above in the following manner: the ring $C_{\psi}(X)$ of all functions in $C(X)$ with pseudocompact support is an essential ideal of $C(X)$ if and only if the ring $C^{\psi}_{\infty}(X)\equiv \{f\in C(X):\text{ for each } \epsilon>0 \text{ in }\mathbb{R}, \cl_X\{x\in X:\lvert f(x)\rvert > \epsilon\}\text{ is pseudocompact}\}$ is an essential ideal of $C^*(X)$ when and only when $X$ is almost locally pseudocompact meaning that each nonempty open set $U$ in $X$ contains nonempty open set $V$ such that $\cl_XV$ is pseudocompact. The ring $C^{\psi}_{\infty}(X)$ may be called the pseudocompact analogue of the ring $C_{\infty}(X)$. As a follow up of the results of this type, we have addressed the problem: when does $C_{\mathcal{P}}(X)$ (respectively $C^{\mathcal{P}}_{\infty}(X)$) become identical to the intersection of all essential ideals of $C(X)$ also known as the socle of $C(X)$. We conclude this article after establishing that ideals of the ring $C(X)$ of the form $C_{\mathcal{P}}(X)$ with $\mathcal{P}\in \Omega (X)$ are precisely the $z^\circ$-ideals of $C(X)$, investigated amongst others by Azarpanah, Karamzadeh, Aliabad and Karavan in a series of articles \cite{AKA},\cite{AKAL},\cite{AKAR},\cite{AA}. On realizing $z^\circ$-ideals of $C(X)$ as ideals of the type $C_{\mathcal{P}}(X)$, we have given an alternative short proof of the fact that each ideal of $C(X)$ consisting only of zero divisors extends to a $z^\circ$-ideal of $C(X)$. For more information about the rings $C_\mathcal{P}(X)$ and $C^\mathcal{P}_\infty(X)$, we refer to the articles \cite{AG}, \cite{AS}. For undefined terms see \cite{G}.
  
\section{A Characterization of pseudocompact spaces $X$ in terms of ideals of the form $C_{\mathcal{P}}(X)$}
  For a subset $A$ of $C(X)$ and a subset $T$ of $\beta X$, we will use the notation $\upsilon_AX=\{p\in\beta X: f^*(p)\in\mathbb{R}\text{ for each } f\in A\}$ and $C_T=\{ f\in C(X):f^*(p)\in\mathbb{R}\text{ for each }p\in T\}$. Clearly $\upsilon_CX=\upsilon X$, the Hewitt real compactification of $X$ and $\upsilon_{C^*}X=\beta X$. A ring is said to be of $C$-type if it is isomorphic to a ring of the form $C(X)$ for some space $X$. By an intermediate ring we mean a ring that lies between $C^*(X)$ and $C(X)$. Let $\Sigma (X)$ stand for the set of all intermediate rings. For $A, B\in\Sigma(X)$ we write $A\sim B$ if and only if $\upsilon_AX=\upsilon_BX$. $'\sim'$ is an equivalence relation on $\Sigma(X)$ causing a partition of this set into disjoint equivalence classes. Each equivalence class has a largest member, the largest member of the equivalence class of $A(X)$ is given by $\{g\mid_X:g\in C(\upsilon_AX)\}$, which contains precisely those functions in $C(X)$ which have continuous extension over the $A$-compactification $\upsilon_AX$of $X$. Incidentally $C^*(X)$ is the lone member of its own equivalence class. It is easy to see that for any subset $T$ of $\beta X$, $C_T$ is an intermediate ring. The following fact characterizing $C$-type rings amongst members of $\Sigma(X)$ is established in \cite{DA}:

\begin{theorem}\label{1}
For a ring $A(X)\in \Omega(X)$, the following three statements are equivalent:
\begin{enumerate}
\item $A(X)$ is a $C$-type ring.
\item $A(X)$ is the largest member of its own equivalence class modulo the relation $'\sim'$ on $\Sigma(X)$.
\item There exists a subset $T$ of $\beta X$ such that such that $A(X)=C_T$.
\end{enumerate}
\end{theorem}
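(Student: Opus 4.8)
The plan is to run the cycle $(3)\Rightarrow(1)$, $(3)\Rightarrow(2)$, $(2)\Rightarrow(3)$ — all of which are short — and then close it with the one substantial arrow $(1)\Rightarrow(2)$. Throughout I would lean on the description, quoted above, of the largest member of the $\sim$-class of an intermediate ring $A(X)$, namely $A^{\circ}(X):=\{g\mid_{X}: g\in C(\upsilon_{A}X)\}$, together with the elementary observation that $A^{\circ}(X)=C_{\upsilon_{A}X}$. That observation is just this: for $X\subseteq S\subseteq\beta X$, a function $f\in C(X)$ lies in $C_{S}$ precisely when $f^{*}\mid_{S}$ is a real-valued (hence continuous) extension of $f$ over $S$, so $g\mapsto g\mid_{X}$ is a ring isomorphism of $C(S)$ onto $C_{S}$; applying this with $S=\upsilon_{A}X$ gives $A^{\circ}(X)=C_{\upsilon_{A}X}\cong C(\upsilon_{A}X)$, and more generally the same for $S=\upsilon_{C_{S}}X$.

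For $(3)\Rightarrow(1)$: if $A(X)=C_{T}$, set $Z:=\upsilon_{C_{T}}X$. From $T\subseteq Z$ we get $C_{Z}\subseteq C_{T}$, while every $f\in C_{T}$ has $f^{*}$ real on all of $\upsilon_{C_{T}}X=Z$ by the very definition of $Z$, so $C_{T}\subseteq C_{Z}$; hence $A(X)=C_{T}=C_{Z}=C_{\upsilon_{A}X}\cong C(\upsilon_{A}X)$, a $C$-type ring. The same computation yields $(3)\Rightarrow(2)$: since $\upsilon_{A}X=\upsilon_{C_{T}}X=Z$, we have $A(X)=C_{T}=C_{Z}=C_{\upsilon_{A}X}=A^{\circ}(X)$, the largest member of its class. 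And $(2)\Rightarrow(3)$ is immediate: if $A(X)=A^{\circ}(X)=\{g\mid_{X}:g\in C(\upsilon_{A}X)\}$, then by the opening remark this set is $C_{\upsilon_{A}X}$, so $A(X)=C_{T}$ with $T=\upsilon_{A}X$.

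The work is in $(1)\Rightarrow(2)$: one must show that if $A(X)$ is merely abstractly isomorphic to some $C(Y)$, then $A(X)$ already contains every member of $C(X)$ that extends continuously over $\upsilon_{A}X$, i.e.\ $A(X)=A^{\circ}(X)$. Fix a ring isomorphism $\phi:C(Y)\to A(X)$. Since $C(Y)\cong C(\upsilon Y)$ we may assume $Y$ realcompact, and since both rings contain $\mathbb{R}$ with $\phi(1)=1$, an order argument on constants (any constant $r\ge 0$ is a square, and $\phi$ preserves squares and rationals) shows $\phi$ is an $\mathbb{R}$-algebra isomorphism. For $p\in\upsilon_{A}X$ the evaluation $e_{p}:A(X)\to\mathbb{R}$, $h\mapsto h^{*}(p)$, is a well-defined $\mathbb{R}$-algebra epimorphism, so $\ker e_{p}$ is a real maximal ideal of $A(X)$; conversely, as in the standard theory of $A$-realcompactifications, every real maximal ideal of an intermediate ring arises this way from a point of $\upsilon_{A}X$ uniquely determined via $C^{*}(X)\subseteq A(X)$ (which separates points of $\beta X$), and similarly for $C(Y)$ and its point set $Y$. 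Transporting real maximal ideals along $\phi$ therefore gives a bijection $\tau:\upsilon_{A}X\to Y$ with $e_{p}\circ\phi=e_{\tau(p)}$ (two $\mathbb{R}$-algebra epimorphisms onto $\mathbb{R}$ with the same kernel coincide), that is, $\phi(g)^{*}\mid_{\upsilon_{A}X}=g\circ\tau$ for every $g\in C(Y)$. Since $\upsilon_{A}X$ carries the weak topology induced by $\{h^{*}\mid_{\upsilon_{A}X}:h\in A(X)\}$ — which is just its subspace topology from $\beta X$, because $A(X)\supseteq C^{*}(X)$ — and $Y$ carries the weak topology of $C(Y)$, the identity $\phi(g)^{*}\mid_{\upsilon_{A}X}=g\circ\tau$ forces $\tau$ to be a homeomorphism. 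But then $g\mapsto g\circ\tau$ maps $C(Y)$ onto all of $C(\upsilon_{A}X)$, so $A(X)=\phi(C(Y))=\{(g\circ\tau)\mid_{X}:g\in C(Y)\}=\{h\mid_{X}:h\in C(\upsilon_{A}X)\}=A^{\circ}(X)$, which is $(2)$.

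I expect the main obstacle to be exactly the core of that last step: upgrading the set bijection $\tau$ between real maximal ideal spaces to a homeomorphism that is implemented coordinatewise by $\phi$ (i.e.\ $\phi(g)=(g\circ\tau)\mid_{X}$), rather than merely a bijection of underlying sets. This is the Gelfand--Kolmogorov / Banaschewski-type reconstruction, and it rests on two facts that need to be set up with care: that $\upsilon_{A}X$, \emph{with its topology}, is recoverable from the abstract ring $A(X)$ as its real-maximal-ideal space equipped with the weak topology, and the classical analogue for $C(Y)$ with $Y$ realcompact. It is here, and only here, that the hypothesis that $A(X)$ is $C$-type (and not just any intermediate ring) is genuinely used: it guarantees that $\phi$ lands \emph{onto} a complete ring $C(Y)$, so that after transport $A(X)$ must contain every function continuous on $\upsilon_{A}X$. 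The subtlety to guard against is the temptation to assume this completeness of $A(X)$ from the start, which would make the argument circular; the order of business above — bijection of real maximal ideals, then identification of topologies, then surjectivity of $\phi$ — is arranged precisely to avoid that.
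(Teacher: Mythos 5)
The paper does not actually prove this statement: Theorem \ref{1} is quoted as ``established in \cite{DA}'' with no argument given, so there is no in-paper proof to compare against. Judged on its own terms, your proof is correct and follows the standard route (and, as far as I can tell, essentially the route of \cite{DA}): the identification $C_{S}\cong C(S)$ for $X\subseteq S\subseteq\beta X$ via restriction makes $(3)\Rightarrow(1)$, $(3)\Rightarrow(2)$ and $(2)\Rightarrow(3)$ immediate, and the Gelfand--Kolmogorov-style reconstruction handles $(1)\Rightarrow(2)$. (You also silently repair the paper's typo ``$A(X)\in\Omega(X)$'', which should read $A(X)\in\Sigma(X)$.)

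The one point to flag is that the weight of $(1)\Rightarrow(2)$ rests on a fact you invoke as ``standard theory'' rather than prove: that the real maximal ideals of an intermediate ring $A(X)$ are exactly the kernels of the evaluations $e_{p}$, $p\in\upsilon_{A}X$, so that the real-maximal-ideal space of the abstract ring $A(X)$ recovers $\upsilon_{A}X$ as a set (and then, via $C^{*}(X)\subseteq A(X)$, as a topological space). This is genuinely in the literature (Redlin--Watson; Dom\'{\i}nguez--G\'omez--Mulero \cite{DO}; De--Acharyya \cite{DA}), and your squeeze argument for $\phi$ fixing the real constants is fine since it only needs squares to be nonnegative in $A(X)$ and nonnegative constants to be squares in $C(Y)$. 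But in a self-contained writeup this correspondence is the actual content and would need either an explicit citation or a proof; everything else in your argument is elementary bookkeeping around it. Your closing remark about where the $C$-type hypothesis is used, and about avoiding circularity, is accurate.
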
  
 We also write down the following proposition giving some natural examples of $C$-type intermediate rings as recorded in \cite{DO}.
 
\begin{theorem}\label{2}
For any ideal $I$ of $C(X)$, $C^*(X)+I=\{f+g:f\in C^*(X)\text{and } g\in I\}$ is a $C$-type intermediate ring.
\end{theorem}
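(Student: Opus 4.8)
The plan is to appeal to Theorem \ref{1}: it is enough to check that $C^*(X)+I$ is an intermediate ring and that $C^*(X)+I=C_T$ for a suitable $T\subseteq\beta X$. The first point is routine — since $I$ is an ideal of $C(X)$, the product $(f_1+g_1)(f_2+g_2)$ of two elements of $C^*(X)+I$ has bounded part $f_1f_2\in C^*(X)$ and the remaining terms $f_1g_2,\,g_1f_2,\,g_1g_2$ in $I$, so $C^*(X)+I$ is a ring, and it plainly lies between $C^*(X)$ and $C(X)$. For $T$ the natural choice is $T=\upsilon_{C^*(X)+I}X$; since every bounded function extends to a real-valued function on all of $\beta X$, one sees at once that $\upsilon_{C^*(X)+I}X=\upsilon_I X$ and, again straight from the definition of $T$, that $C^*(X)+I\subseteq C_T$. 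Everything then reduces to the reverse inclusion $C_T\subseteq C^*(X)+I$.

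This is the substantive step: given $h\in C(X)$ with $h^*$ real-valued on $T=\upsilon_I X$, I must write $h=f+g$ with $f\in C^*(X)$ and $g\in I$. Let $Z_\infty(h)=(h^*)^{-1}(\{\infty\})=\bigcap_{n\geq1}\cl_{\beta X}\{x\in X:|h(x)|>n\}$, a compact subset of $\beta X$. Because $h^*$ is real on $T$ we have $Z_\infty(h)\cap T=\emptyset$, hence $Z_\infty(h)\subseteq\beta X\setminus T=\bigcup_{g\in I}(g^*)^{-1}(\{\infty\})$. For each $p\in Z_\infty(h)$ pick $g_p\in I$ with $g_p^*(p)=\infty$, and replace it by $g_p^2$ so that $g_p\geq0$; then $W_p:=(g_p^*)^{-1}\bigl(\{\infty\}\cup\{t\in\mathbb{R}:|t|>1\}\bigr)$ is an open neighbourhood of $p$ on which $g_p^*$ exceeds $1$ (allowing the value $\infty$). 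Compactness of $Z_\infty(h)$ yields finitely many $W_{p_1},\dots,W_{p_k}$ covering it; put $g_0=g_{p_1}+\cdots+g_{p_k}\in I$. Then $g_0\geq0$, and since the $g_{p_j}$ are nonnegative we have $g_0^*\geq g_{p_i}^*$ everywhere, so $g_0^*>1$ (or $=\infty$) on the open set $W=W_{p_1}\cup\cdots\cup W_{p_k}\supseteq Z_\infty(h)$.

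The main obstacle is that $g_0$ is only bounded below by $1$ near $Z_\infty(h)$, whereas to absorb the unbounded behaviour of $h$ into $I$ one needs an element of $I$ that actually dominates $|h|$ there; this is the step I expect to be delicate. It is handled in two more moves. Since $\{\cl_{\beta X}\{|h|>n\}\}_{n}$ is a decreasing family of compact sets with intersection $Z_\infty(h)\subseteq W$, compactness of $\beta X$ gives an integer $N$ with $\cl_{\beta X}\{|h|>N\}\subseteq W$; consequently $g_0(x)>1$ for every $x\in X$ with $|h(x)|>N$. Now set $g_1=g_0\cdot\max(|h|,1)\in I$; then $g_1\geq0$, and on $\{x:|h(x)|>N\}$ one has $g_1=g_0|h|>|h|$. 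Finally define
\[
f=\frac{h}{1+g_1}\in C(X),\qquad g=h-f=\frac{g_1h}{1+g_1}=g_1f .
\]
As $g_1\in I$ and $f\in C(X)$, we get $g\in I$; and $f$ is bounded, since $|f|\leq|h|\leq N$ where $|h|\leq N$, while $|f|=|h|/(1+g_1)<1$ where $|h|>N$. Hence $h=f+g\in C^*(X)+I$, so $C_T\subseteq C^*(X)+I$. Therefore $C^*(X)+I=C_T$, and Theorem \ref{1} identifies it as a $C$-type intermediate ring.
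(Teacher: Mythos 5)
Your proof is correct. Note that the paper itself gives no argument for this statement; it simply records it as a known fact from the Dominguez--Gomez--Mulera reference \cite{DO}, so you have supplied a proof where the paper supplies only a citation. Your route is the natural one given Theorem \ref{1}: identify $C^*(X)+I$ with $C_T$ for $T=\upsilon_I X$. The easy inclusion and the reduction to showing $C_T\subseteq C^*(X)+I$ are fine, and the substantive step is handled correctly: the compactness of $Z_\infty(h)=\bigcap_n\cl_{\beta X}\{|h|>n\}$, the extraction of a finite subcover by the open sets $W_{p_j}$, and the shrinking argument giving $\cl_{\beta X}\{|h|>N\}\subseteq W$ are all sound (for the last point, the sets $\cl_{\beta X}\{|h|>n\}\setminus W$ form a decreasing family of compacta with empty intersection). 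The only places a reader might pause are (i) the pointwise inequality $g_0^*\geq g_{p_j}^*$ on $\beta X$, which you only actually need at points of $X$, where it is immediate from nonnegativity, and (ii) the final decomposition $h=f+g_1f$ with $f=h/(1+g_1)$: here $1+g_1\geq 1$ guarantees $f\in C(X)$, $g_1\in I$ and $f\in C(X)$ give $g_1f\in I$, and your two-case estimate shows $|f|\leq\max(N,1)$, so $f\in C^*(X)$. Both check out (taking $N\geq 1$ without loss of generality). This is essentially the standard proof that $C^*(X)+I$ equals the ring of restrictions to $X$ of $C(\upsilon_I X)$; the reference \cite{DO} reaches the same conclusion by exhibiting such rings as rings of fractions of $C^*(X)$, which is a different organizing idea but no more elementary.
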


We are now ready to establish the first main result of the present paper.

\begin{theorem}\label{3}
 Each real compact space lying between $X$ and $\beta X$ can be achieved as a space of the form $\upsilon_{C_{\mathcal{P}}}X$ for some $\mathcal{P}\in\Omega (X)$ if and only if $X$ is pseudocompact.
\end{theorem}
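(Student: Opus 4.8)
The plan is to prove the two implications separately. I begin with the standard observation that if $Y$ is realcompact and $X\subseteq Y\subseteq\beta X$ then $\upsilon X\subseteq Y$: here $\beta Y=\beta X$, and realcompactness of $Y$ gives, for each $q\in\beta X\setminus Y$, some $h\in C(\beta X)$ with $h\ge 0$, $h(q)=0$ and $h>0$ on $Y$; then $f=(1/h)|_X\in C(X)$ has $f^*(q)=\infty$, so $q\notin\upsilon X$. Now suppose $X$ is pseudocompact. Then $C(X)=C^*(X)$, every $f^*$ is real valued on all of $\beta X$, and $\upsilon X=\upsilon_{C(X)}X=\beta X$; by the observation the only realcompact space between $X$ and $\beta X$ is $\beta X$ itself, and $\beta X=\upsilon X=\upsilon_{C_\mathcal{P}}X$ for $\mathcal{P}$ the ideal of all closed subsets of $X$. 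This settles one direction.

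For the converse I argue contrapositively, so assume $X$ is not pseudocompact. The heart of the matter is the structural fact that \emph{for every $\mathcal{P}\in\Omega(X)$ the set $\beta X\setminus\upsilon_{C_\mathcal{P}}X$ is open in the subspace $\beta X\setminus\upsilon X$}, equivalently $\upsilon_{C_\mathcal{P}}X\setminus\upsilon X$ is closed in $\beta X\setminus\upsilon X$. Indeed $\beta X\setminus\upsilon_{C_\mathcal{P}}X=\bigcup_{f\in C_\mathcal{P}(X)}(f^*)^{-1}(\{\infty\})$, and each of these sets avoids $\upsilon X$. Fix $f\in C_\mathcal{P}(X)$, which we may take $\ge 0$, and put $A=\cl_X(X-Z(f))\in\mathcal{P}$. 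The open set $W_f=(f^*)^{-1}((0,\infty])$ contains $(f^*)^{-1}(\{\infty\})$ and is contained in $\cl_{\beta X}A$, because $W_f\cap X=\{f\ne 0\}\subseteq A$ is dense in $W_f$. I claim $W_f\setminus\upsilon X\subseteq\beta X\setminus\upsilon_{C_\mathcal{P}}X$: given $p\in W_f$ with $p\notin\upsilon X$, choose $g\in C(X)$ with $g\ge 0$ and $g^*(p)=\infty$, and $k\in C(\beta X)$ with $0\le k\le 1$, $k(p)=1$ and $\{k\ne 0\}\subseteq W_f$; then $gk|_X\in C(X)$, its support $\cl_X(X-Z(gk|_X))$ lies in $X\cap\cl_{\beta X}A=A\in\mathcal{P}$ so $gk|_X\in C_\mathcal{P}(X)$, and since $k\ge 1/2$ near $p$ we get $(gk|_X)^*(p)=\infty$. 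Hence $\beta X\setminus\upsilon_{C_\mathcal{P}}X=\bigcup_f(W_f\setminus\upsilon X)$ is open in $\beta X\setminus\upsilon X$, as asserted.

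I then construct the realcompact space violating this. As $X$ is not pseudocompact it contains a closed, C-embedded copy $N$ of $\mathbb N$, so $\cl_{\beta X}N\cong\beta\mathbb N$ and $\cl_{\upsilon X}N=\upsilon N=N$ (standard facts about C-embedded copies of $\mathbb N$); thus $N^*:=\cl_{\beta X}N\setminus N$ is disjoint from $\upsilon X$. Partition $N=\bigcup_{i\ge 1}A_i$ into infinitely many infinite sets, choose $d_i\in\cl_{\beta X}A_i\setminus N\subseteq N^*$, and set $D=\{d_i:i\ge 1\}$. Since $N$ is C${}^*$-embedded, $A_i$ and $N\setminus A_i$ are completely separated in $X$, hence in $\beta X$, which provides each $d_i$ with a neighbourhood in $\beta X$ avoiding the other $d_j$'s, so $D$ is relatively discrete; being infinite it is not closed in $\beta X$, so $\cl_{\beta X}D\setminus D\ne\emptyset$. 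Using C-embeddedness, extend the function on $N$ that equals $i$ on each $A_i$ to $\Phi\in C(X)$ and set $\psi=1/\max(\Phi,1)\in C^*(X)$. Then $\psi^*\equiv 1/i$ on $\cl_{\beta X}A_i$, so $\psi^*(d_i)=1/i$, while $\psi^*=1/\max(\Phi^*,1)>0$ on $\upsilon X$. Consequently the zero set $Z(\psi^*)$ of $\beta X$ is disjoint from $\upsilon X\cup D$ yet contains every point of $\cl_{\beta X}D\setminus D$ (along any net in $D$ converging to such a point the indices tend to infinity, so $\psi^*\to 0$). Now put $Y=\upsilon X\cup D$. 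Clearly $X\subseteq Y\subseteq\beta X$. The set $Y\setminus\upsilon X=D$ is not closed in $\beta X\setminus\upsilon X$, since its closure there meets $\cl_{\beta X}D\setminus D\subseteq Z(\psi^*)\subseteq\beta X\setminus\upsilon X$. And $Y$ is realcompact: given $q\in\beta X\setminus Y$, if $q\notin\cl_{\beta X}D$ then a finite intersection of zero sets of $\beta X$ separates $q$ from the closed set $\cl_{\beta X}D$ and, using realcompactness of $\upsilon X$, from $\upsilon X$; if $q\in\cl_{\beta X}D\setminus D$ then $Z(\psi^*)$ is a zero set of $\beta X$ through $q$ missing $Y$. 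By the structural fact, $Y$ is not of the form $\upsilon_{C_\mathcal{P}}X$ for any $\mathcal{P}\in\Omega(X)$, which completes the contrapositive.

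The step I expect to be the genuine obstacle is proving that $Y=\upsilon X\cup D$ is realcompact: the difficulty is concentrated at the accumulation points of $D$, and the function $\psi$ — manufactured from an \emph{unbounded} $\Phi$, which exists only because $N$ is C-embedded, that is, because $X$ fails to be pseudocompact — is designed precisely to yield one zero set of $\beta X$ that absorbs all those points while staying clear of $D$ and $\upsilon X$. The remaining work, namely the support bookkeeping in the structural fact and the verification that $D$ is relatively discrete and not closed, is routine.
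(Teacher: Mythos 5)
Your proof is correct, but the hard direction runs along a genuinely different track from the paper's. The paper works algebraically with intermediate rings: it takes the same $C$-embedded copy of $\mathbb{N}$, partitions it, picks one point $p_k$ from each $\cl_{\beta X}\mathbb{N}_k\setminus\mathbb{N}_k$, forms the $C$-type ring $C_T$ for $T=\{p_k\}$, shows $C_T\neq C^*(X)+I$ for every ideal $I$ of $C(X)$ (via an unbounded $f\in C_T$ and a unit $u$ with $u^*(p_k)=\infty$), and then invokes the classification of $C$-type rings as the unique largest members of their $\sim$-classes, together with the fact that $C^*(X)+C_\mathcal{P}(X)$ is $C$-type, to conclude that $\upsilon_{C_T}X$ cannot be any $\upsilon_{C_\mathcal{P}}X$. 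You instead isolate a purely topological invariant: for every $\mathcal{P}\in\Omega(X)$ the set $\beta X\setminus\upsilon_{C_\mathcal{P}}X$ is the trace on $\beta X\setminus\upsilon X$ of an open subset of $\beta X$ (your computation via the sets $W_f$ and the functions $gk|_X$ is sound — the key point that multiplying by a $C^*$-function supported inside $W_f$ keeps you in $C_\mathcal{P}(X)$ while preserving the pole is exactly right), and then you exhibit the explicit realcompact space $Y=\upsilon X\cup D$ whose complement fails this relative openness; your verification that $Y$ is realcompact via the zero set $Z(\psi^*)$ of $\beta X$ swallowing $\cl_{\beta X}D\setminus D$ is the genuinely new work and it checks out. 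What each approach buys: the paper's route is shorter once the imported machinery (Theorems \ref{1} and \ref{2}) is granted, and it yields the Remark following the theorem about $C$-type rings of the form $C^*(X)+I$; your route is self-contained, produces the obstructing realcompactification concretely rather than as $\upsilon_{C_T}X$, and your structural fact is a reusable necessary condition describing exactly which realcompact spaces between $X$ and $\beta X$ can possibly arise as $\upsilon_{C_\mathcal{P}}X$ — something the paper's argument does not make visible.
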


\begin{proof}
 If $X$ is pseudocompact then $\beta X=\upsilon X$, so that $\beta X$ is the only real compact space lying between $X$ and $\beta X$ and we can write $\beta X=\upsilon_{C_{\mathcal{P}}}X$, where we can take $\mathcal{P}\equiv \mathcal{K}$. To prove the other part of this theorem assume that $X$ is not pseudocompact. We shall construct a real compact space lying between $X$ and $\beta X$ which is not of the form $\upsilon_{C_{\mathcal{P}}}X$ for a $\mathcal{P}\in\Omega(X)$. As $X$ is not pseudocompact, there exists a copy of $\mathbb{N}=\{1, 2, 3, .....\}$, $C$-embedded in $X$. We make a partition of $\mathbb{N}$ as follows: $\mathbb{N}=\cup_{k\in\omega_\circ}\mathbb{N}_{k}$, where each $\mathbb{N}_k$ is an infinite set and for $k\neq j$, $\mathbb{N}_k\cap\mathbb{N}_j=\phi$. Since $X$ is $C$-embedded in $\upsilon X$ it follows that for each $k\in \omega_\circ$, $\mathbb{N}_K$ is $C$-embedded in $\upsilon X$. As every $C$-embedded countable subset of a Tychonoff space $Y$ is a closed subset of $Y$ [see Ex. 3B3, Chapter 3, Gillman-Jerison text \cite{G}],it turns out that each $\mathbb{N}_k$ is a closed subset of $\upsilon X$. Since $\mathbb{N}_k$ is noncompact and therefore not closed in $\beta X$, it follows that $\cl_{\beta X}\mathbb{N}_k\setminus\mathbb{N}_k\subseteq \beta X\setminus\upsilon X$. Since disjoint zero sets in a Tychonoff space $Y$ have their closures in $\beta Y$ disjoint, this further implies that $\{ \cl_{\beta X}\mathbb{N}_k\setminus\mathbb{N}_k:k\in\omega_\circ\}$, constitutes a countably infinite family of pairwise disjoint nonempty subsets of $\beta X\setminus \upsilon X$, indeed each such subset is a copy of $\beta \mathbb{N}\setminus\mathbb{N}$. For each $k\in\omega_\circ$, we select a point $p_k\in \cl_{\beta X}\mathbb{N}_k\setminus\mathbb{N}_k$. Let $T=\{p_1,p_2,...\}$ and recall that $C_T=\{f\in C(X):f^*(p_k)\in\mathbb{R}\text{ for each } k\in\omega_\circ\}$. It follows from Theorem  \ref{1} that $C_T$ is a $C$-type intermediate ring. We now assert that there does not exist any ideal $I$ of $C(X)$ such that $C_T=C^*(X)+I$. We argue by contradiction and suppose $C_T=C^*(X)+I$ for some ideal $I$ of $C(X)$. Since $\mathbb{N}$ is $C$-embedded in $X$, there exists an $f\in C(X)$ such that $f(\mathbb{N}_k)=k$ for each $k\in\omega_\circ$. Consequently $f^*(\cl_{\beta X}\mathbb{N}_k)=k$ for each $k\in\omega_\circ$ in particular  $f^*(p_k)=k$. Thus $f^*$ takes real values on the whole of $T$, hence $f\in C_T$. So we can write $f=f_1+f_2$, where $f_1\in C^*(X)$ and $f_2\in I$. Now  for a sufficiently large value of $k$, we can write $f^*(p_k)>1$. But as $p_k$ is a hyperreal point of $\beta X$ i.e. $p_k\in\beta X\setminus \upsilon X$, we can find out an $u\in C(X)$ with $Z(u)=\phi$ such that $u^*(p_k)=\infty$. It follows that $(uf_2)^*(p_k)=\infty$. On the other hand, the fact that $I$ is an ideal of $C(X)$ and $f_2\in I$ implies that $uf_2\in I$. Therefore $uf_2\in C_T$ and hence $(uf_2)^*(p_k)\in\mathbb{R}$. We arrive at a contradiction. Thus it is settled that $C_T\neq C^*(X)+I$ for any ideal $I$ of $C(X)$. To complete this theorem we shall prove that the realcompact space $\upsilon_{C_T}X\equiv\{p\in\beta X:f^*(p)\in\mathbb{R}\text{ for each } f\in C_T\}$ cannot be expressed in the form $\upsilon_{C_{\mathcal{P}}}X$ for any $\mathcal{P}\in \Omega(X)$. This time also we argue by contradiction and assume that there is an ideal $\mathcal{P}$ of closed sets in $X$ for which $\upsilon_{C_T}X=\upsilon_{C_{\mathcal{P}}}X$. It follows that $\upsilon_{C_T}X=\upsilon_{C_{\mathcal{P}}(X)+C^*(X)}X$, this means that $C_T$ and $C_{\mathcal{P}}(X)+C^*(X)$ belong to the same equivalence class of $\Sigma (X)$modulo the relation $'\sim'$, introduced earlier. But $C_T$ is a $C$-type intermediate ring as noted above and it follows from the Theorem \ref{2} that $C_{\mathcal{P}}(X)+C^*(X)$ is also a $C$-type intermediate ring. Since each $C$-type intermediate ring happens to be the largest member of its own equivalence class as realised in Theorem \ref{1}, this implies that $C_T=C_{\mathcal{P}}(X)+C^*(X)$ a contradiction.  
\end{proof}

\begin{remark}
	The above proof clearly suggests that a space $X$ is pseudocompact if and only if each $C$-type ring lying between $C^*(X)$ and $C(X)$ is of the form $C^*(X)+I$ for some ideal $I$ of $C(X)$.
\end{remark}
 
\section{When does the class of locally-$\mathcal{P}$ spaces$/$almost locally-$\mathcal{P}$ spaces becomes closed under product?} 
 The following result gives a description of the members of products of ideals of closed sets. The proof of this fact is routine.
 
\begin{theorem}\label{4}
Let $X$ be the product of an indexed family $\{X_{\alpha}:\alpha\in\Lambda\}$ of topological spaces and $\mathcal{P}\equiv\prod_{\alpha\in\Lambda}\mathcal{P}_{\alpha}$, where for each $\alpha\in\Lambda,~\mathcal{P}_{\alpha}\in\Omega(X_\alpha)$. Then $\mathcal{P}=\{A\subseteq X: A\text{ is closed in }X \text{ and }A\subseteq \prod_{\alpha\in\Lambda}A_\alpha$, where $A_\alpha\in\mathcal{P}_\alpha\text{ for each }\alpha\in\Lambda\}$.
\end{theorem}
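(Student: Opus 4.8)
The plan is to prove the two inclusions separately. Write $\mathcal{Q}$ for the right-hand side, namely the collection of all closed subsets $A$ of $X$ for which there exist $A_\alpha\in\mathcal{P}_\alpha$ ($\alpha\in\Lambda$) with $A\subseteq\prod_{\alpha\in\Lambda}A_\alpha$. The inclusion $\mathcal{Q}\subseteq\mathcal{P}$ will follow at once from the two defining axioms of an ideal of closed sets: given $A\in\mathcal{Q}$, the box $\prod_{\alpha\in\Lambda}A_\alpha$ is a closed subset of $X$ (a product of closed sets is closed in the product topology), and it is one of the sets generating $\mathcal{P}$, so $\prod_{\alpha\in\Lambda}A_\alpha\in\mathcal{P}$; since $A$ is closed in $X$ and $A\subseteq\prod_{\alpha\in\Lambda}A_\alpha$, axiom (2) in the definition of an ideal of closed sets yields $A\in\mathcal{P}$.

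For the reverse inclusion $\mathcal{P}\subseteq\mathcal{Q}$, the idea is to show that $\mathcal{Q}$ is itself an ideal of closed sets in $X$ that contains every generating box $\prod_{\alpha\in\Lambda}A_\alpha$ with $A_\alpha\in\mathcal{P}_\alpha$; since $\mathcal{P}$ was defined as the \emph{smallest} such ideal, this forces $\mathcal{P}\subseteq\mathcal{Q}$. That $\mathcal{Q}$ contains the generators is immediate from its definition. Checking the ideal axioms for $\mathcal{Q}$ is routine: if $B$ is closed in $X$ and $B\subseteq A\in\mathcal{Q}$ with $A\subseteq\prod_{\alpha\in\Lambda}A_\alpha$, then $B\subseteq\prod_{\alpha\in\Lambda}A_\alpha$, so $B\in\mathcal{Q}$; and if $A\subseteq\prod_{\alpha\in\Lambda}A_\alpha$ and $B\subseteq\prod_{\alpha\in\Lambda}B_\alpha$ with $A_\alpha,B_\alpha\in\mathcal{P}_\alpha$, then $A\cup B\subseteq\prod_{\alpha\in\Lambda}(A_\alpha\cup B_\alpha)$, where each $A_\alpha\cup B_\alpha\in\mathcal{P}_\alpha$ because $\mathcal{P}_\alpha$ is an ideal, and $A\cup B$ is closed, so $A\cup B\in\mathcal{Q}$.

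Combining the two inclusions gives $\mathcal{P}=\mathcal{Q}$, which is the assertion of the theorem. There is no substantial obstacle; the only points that deserve a line of care are that an arbitrary product of closed sets is closed in the product topology (so that the generating boxes genuinely belong to $\Omega(X)$) and the use of the union-closure of each $\mathcal{P}_\alpha$ when verifying that $\mathcal{Q}$ is closed under finite unions.
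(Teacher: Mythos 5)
Your argument is correct and is exactly the ``routine'' verification the paper omits: the generating boxes $\prod_{\alpha}A_\alpha=\bigcap_\alpha\pi_\alpha^{-1}(A_\alpha)$ are closed, so downward closure gives $\mathcal{Q}\subseteq\mathcal{P}$, and checking that $\mathcal{Q}$ is itself an ideal of closed sets containing the generators gives minimality in the other direction. Nothing further is needed.
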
 

With the notation of Theorem \ref{4}, the next proposition tells us precisely when the product of locally-$\mathcal{P}_\alpha$-spaces becomes locally-$\mathcal{P}$.

\begin{theorem}\label{5}
$X=\prod_{\alpha\in\Lambda}X_\alpha$ is a locally-$\mathcal{P}$space if and only if 
\begin{enumerate}
\item $X_\alpha$ is locally-$\mathcal{P}_\alpha$ for each $\alpha\in\Lambda$.
\item $X_\alpha\in\mathcal{P}_\alpha$ for all but possibly finitely many $\alpha$'s in $\Lambda$. 
\end{enumerate}
\end{theorem}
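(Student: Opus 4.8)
The plan is to distil from Theorem \ref{4} a workable membership criterion for $\mathcal{P}=\prod_{\alpha\in\Lambda}\mathcal{P}_\alpha$ and then to exploit the fact that a basic open set in a product has only finitely many ``proper'' coordinates. Throughout I take each $X_\alpha\neq\emptyset$ (otherwise $X=\emptyset$ and there is nothing to prove), and I write $\pi_\alpha\colon X\to X_\alpha$ for the projections.

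First I would record, directly from Theorem \ref{4}, the following criterion: a closed set $A\subseteq X$ lies in $\mathcal{P}$ if and only if $\cl_{X_\alpha}\pi_\alpha(A)\in\mathcal{P}_\alpha$ for every $\alpha\in\Lambda$. Indeed, if $A\subseteq\prod_\alpha A_\alpha$ with $A_\alpha\in\mathcal{P}_\alpha$, then $\cl_{X_\alpha}\pi_\alpha(A)\subseteq A_\alpha$, and $\mathcal{P}_\alpha$ being closed under closed subsets gives $\cl_{X_\alpha}\pi_\alpha(A)\in\mathcal{P}_\alpha$; conversely $A\subseteq\prod_\alpha\cl_{X_\alpha}\pi_\alpha(A)$ always, so Theorem \ref{4} yields $A\in\mathcal{P}$ once each factor lies in $\mathcal{P}_\alpha$. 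Combined with the elementary identity $\cl_X\bigl(\prod_\alpha U_\alpha\bigr)=\prod_\alpha\cl_{X_\alpha}U_\alpha$, this is all the machinery the proof needs.

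For the sufficiency of (1) and (2), let $F$ be the finite set of indices with $X_\alpha\notin\mathcal{P}_\alpha$. Given $x=(x_\alpha)\in X$, use (1) to pick for each $\alpha\in F$ an open neighbourhood $U_\alpha\ni x_\alpha$ with $\cl_{X_\alpha}U_\alpha\in\mathcal{P}_\alpha$, and set $U_\alpha=X_\alpha$ for $\alpha\notin F$; then $U=\prod_\alpha U_\alpha$ is an open neighbourhood of $x$, and its closure $\prod_\alpha\cl_{X_\alpha}U_\alpha$ is a product of members of the $\mathcal{P}_\alpha$ (for $\alpha\notin F$ because $X_\alpha\in\mathcal{P}_\alpha$ by (2)), hence lies in $\mathcal{P}$ by Theorem \ref{4}. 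For the necessity of (1), fix $\beta\in\Lambda$ and $x_\beta\in X_\beta$, choose $x\in X$ with $\pi_\beta(x)=x_\beta$, pick an open $U\ni x$ with $\cl_XU\in\mathcal{P}$, and shrink $U$ to a basic open set $\prod_\alpha V_\alpha$ with $x\in\prod_\alpha V_\alpha\subseteq U$; then $V_\beta=\pi_\beta\bigl(\prod_\alpha V_\alpha\bigr)\subseteq\cl_{X_\beta}\pi_\beta(\cl_XU)$, and the right-hand side lies in $\mathcal{P}_\beta$ by the criterion above, so $\cl_{X_\beta}V_\beta\in\mathcal{P}_\beta$, proving $X_\beta$ is locally-$\mathcal{P}_\beta$. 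For the necessity of (2), suppose toward a contradiction that $X_\alpha\notin\mathcal{P}_\alpha$ for infinitely many $\alpha$; pick any $x\in X$, an open $U\ni x$ with $\cl_XU\in\mathcal{P}$, and a basic open $\prod_\alpha V_\alpha\ni x$ contained in $U$ with $V_\alpha=X_\alpha$ off a finite set $G$. Choosing a ``bad'' index $\gamma\notin G$ gives $\pi_\gamma(\cl_XU)\supseteq\pi_\gamma\bigl(\prod_\alpha V_\alpha\bigr)=X_\gamma$, so $\cl_{X_\gamma}\pi_\gamma(\cl_XU)=X_\gamma$, and the criterion above forces $X_\gamma\in\mathcal{P}_\gamma$ --- a contradiction.

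I do not expect any genuinely hard step; the whole argument is bookkeeping organised around Theorem \ref{4} and the finite-support shape of basic open sets. The one place to be slightly careful is the necessity of (2): a single index $\gamma$ with $X_\gamma\notin\mathcal{P}_\gamma$ is not by itself obstructive, since $\gamma$ could sit inside the finite support $G$ of the chosen basic neighbourhood, so one genuinely needs infinitely many such indices in order to be sure of finding one outside $G$. A minor bookkeeping point is that, as phrased, the statement presupposes all factors $X_\alpha$ to be nonempty.
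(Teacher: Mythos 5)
Your proof is correct and follows essentially the same route as the paper's: both directions rest on Theorem \ref{4}, the identity $\cl_X\bigl(\prod_\alpha U_\alpha\bigr)=\prod_\alpha\cl_{X_\alpha}U_\alpha$, and the finite-support form of basic open sets; your projection-based membership criterion is just a repackaging of the containment $\cl_XU\subseteq\prod_\alpha A_\alpha$ that the paper uses directly. If anything, you are slightly more careful than the paper in explicitly shrinking $U$ to a basic open neighbourhood and in flagging the nonemptiness of the factors.
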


\begin{proof}
Let $X$ be locally-$\mathcal{P}$, choose a point $x_\beta$ from the space $X_\beta, \beta\in\Lambda$. Let $x$ be a point in $X$, whose $\beta$ th co-ordinate is $x_\beta$. On using Theorem \ref{4}, we can find out an open neighbourhood $U$ of $x$ in $X$ such that $\cl_X U\subseteq \prod_{\alpha\in\Lambda}A_\alpha$, where $A_\alpha\in\mathcal{P}_\alpha$ for each $\alpha\in\Lambda$. We can write $U=\prod_{\alpha\in\Lambda}U_\alpha$, where each $U_\alpha$ is open in $X_\alpha$ and $U_\alpha\neq X_\alpha$ for at most finitely many $\alpha$'s in $\Lambda$. It follows that $\prod_{\alpha\in\Lambda}\cl_{X_\alpha}U_\alpha\subseteq \prod_{\alpha\in\Lambda}A_\alpha$ and hence $\cl_{X_\beta}U_\beta\subseteq A_\beta$ implying that $\cl_{X_\beta}U_\beta\in\mathcal{P}_\beta$. Thus $X_\beta$ becomes locally-$\mathcal{P}_\beta$ and it is easy to see that $A_\alpha=X_\alpha$ for all but finitely many $\alpha$'s and consequently $X_\alpha\in\mathcal{P}_\alpha$ for all these $\alpha$'s.

Conversely, let $X_\alpha$ be locally-$\mathcal{P}_\alpha$ for each $\alpha\in\Lambda$ and let there be a finite subset $\Lambda_\circ=\{\alpha_1,\alpha_2,...,\alpha_n\}$ of $\Lambda$ for which $X_\alpha\in\mathcal{P}_\alpha$ for all $\alpha\in\Lambda\setminus\Lambda_\circ$. Choose a point $x$ from $X$, with its co-ordinate $x_\beta$ for $\beta\in\Lambda$. For each $\alpha\in\Lambda\setminus\Lambda_\circ$, we select an open neighbourhood $V_\alpha$ of $x_\alpha$ in $X_\alpha$ such that $\cl_{X_\alpha}V_\alpha\in\mathcal{P}_\alpha$. Let $V=\prod_{\alpha\in\Lambda}V_\alpha$, with $V_\alpha=X_\alpha$ whenever $\alpha\in\Lambda\setminus\Lambda_\circ$. Then $V$ is an open neighbourhood of $x$ in $X$ and $\cl_XV=\prod_{\alpha\in\Lambda}\cl_{X_\alpha}V_\alpha\in\mathcal{P}$. Thus $X$ becomes locally-$\mathcal{P}$.
\end{proof}

The following fact follows by adopting similar arguments:

\begin{theorem}\label{6}
$X=\prod_{\alpha\in\Lambda}X_\alpha$ is an almost locally-$\mathcal{P}$-space if and only if 
\begin{enumerate}
\item $X_\alpha$ is almost locally-$\mathcal{P}_\alpha$ for each $\alpha\in\Lambda$ and
\item $X_\alpha\in\mathcal{P}_\alpha$ for all but finitely many $\alpha$'s in $\Lambda$.
\end{enumerate}
\end{theorem}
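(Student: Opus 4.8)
The statement to prove is Theorem~\ref{6}, which asserts that $X=\prod_{\alpha\in\Lambda}X_\alpha$ is almost locally-$\mathcal{P}$ if and only if each $X_\alpha$ is almost locally-$\mathcal{P}_\alpha$ and $X_\alpha\in\mathcal{P}_\alpha$ for all but finitely many $\alpha$. The plan is to mimic the proof of Theorem~\ref{5} almost verbatim, replacing the manipulation of a single point by the manipulation of a basic open box, and invoking Theorem~\ref{4} for the description of $\mathcal{P}=\prod_{\alpha\in\Lambda}\mathcal{P}_\alpha$.

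For the forward direction, I would assume $X$ is almost locally-$\mathcal{P}$. Fix $\beta\in\Lambda$ and a nonempty open set $U_\beta\subseteq X_\beta$. Form the nonempty basic open set $U=U_\beta\times\prod_{\alpha\neq\beta}X_\alpha$ in $X$; by hypothesis there is a nonempty open $W\subseteq U$ with $\cl_X W\in\mathcal{P}$. Shrinking $W$ to a basic open box $\prod_{\alpha\in\Lambda}W_\alpha$ (with $W_\alpha=X_\alpha$ off a finite set $F\subseteq\Lambda$, and $W_\beta\subseteq U_\beta$), Theorem~\ref{4} gives $\cl_X W\subseteq\prod_{\alpha\in\Lambda}A_\alpha$ with $A_\alpha\in\mathcal{P}_\alpha$. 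Since $\cl_X\!\left(\prod W_\alpha\right)=\prod\cl_{X_\alpha}W_\alpha$, projecting onto the $\beta$th coordinate yields $\cl_{X_\beta}W_\beta\subseteq A_\beta\in\mathcal{P}_\beta$, so $X_\beta$ is almost locally-$\mathcal{P}_\beta$; and for $\alpha\notin F$ we get $X_\alpha=\cl_{X_\alpha}W_\alpha\subseteq A_\alpha\in\mathcal{P}_\alpha$, forcing $X_\alpha\in\mathcal{P}_\alpha$ for all but finitely many $\alpha$ (namely those in $F$). One subtlety worth a line: the box $\prod W_\alpha$ produced by shrinking $W$ need not have $W_\beta$ a proper subset of $U_\beta$, but since $W\subseteq U$ we automatically have $W_\beta\subseteq U_\beta$, which is all that is needed.

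For the converse, assume each $X_\alpha$ is almost locally-$\mathcal{P}_\alpha$ and $X_\alpha\in\mathcal{P}_\alpha$ for all $\alpha\in\Lambda\setminus\Lambda_\circ$ with $\Lambda_\circ=\{\alpha_1,\dots,\alpha_n\}$ finite. Let $U$ be an arbitrary nonempty open set in $X$; it contains a nonempty basic box $\prod_{\alpha\in\Lambda}U_\alpha$ with $U_\alpha=X_\alpha$ for $\alpha$ outside a finite set $F$. For each $\alpha\in F\cup\Lambda_\circ$, use almost local-$\mathcal{P}_\alpha$-ness (applied to the nonempty open $U_\alpha$, which for $\alpha\in\Lambda_\circ\setminus F$ is $X_\alpha$) to find a nonempty open $V_\alpha\subseteq U_\alpha$ with $\cl_{X_\alpha}V_\alpha\in\mathcal{P}_\alpha$; for $\alpha\notin F\cup\Lambda_\circ$ set $V_\alpha=X_\alpha$, which lies in $\mathcal{P}_\alpha$ by hypothesis. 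Then $V=\prod_{\alpha\in\Lambda}V_\alpha$ is a nonempty open subset of $U$ with $\cl_X V=\prod_{\alpha\in\Lambda}\cl_{X_\alpha}V_\alpha$, and since $\cl_{X_\alpha}V_\alpha\in\mathcal{P}_\alpha$ for every $\alpha$, Theorem~\ref{4} gives $\cl_X V\in\mathcal{P}$. Hence $X$ is almost locally-$\mathcal{P}$.

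I do not anticipate a genuine obstacle here; the only point requiring mild care is the bookkeeping of the two finite sets $F$ (where the chosen box is nontrivial) and $\Lambda_\circ$ (where $X_\alpha$ may fail to lie in $\mathcal{P}_\alpha$), and ensuring that the product of closures equals the closure of the product of boxes — a standard fact in product spaces. Because the argument is structurally identical to that of Theorem~\ref{5}, the paper can legitimately compress it, as it does, to the remark that it ``follows by adopting similar arguments.''
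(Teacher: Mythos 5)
Your proof is correct and is exactly the adaptation of the proof of Theorem~\ref{5} that the paper intends when it says the result ``follows by adopting similar arguments'': the forward direction replaces the point $x$ by a basic open box inside the given $U_\beta\times\prod_{\alpha\neq\beta}X_\alpha$ and uses Theorem~\ref{4} plus the identity $\cl_X\prod W_\alpha=\prod\cl_{X_\alpha}W_\alpha$, and the converse builds the box $V=\prod V_\alpha$ in the same way. No gaps; the bookkeeping of the two finite sets $F$ and $\Lambda_\circ$ is handled correctly.
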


We shall now record a particularly interesting special case of Theorem \ref{5} and \ref{6} For this purpose we need to recall that a subset $A$ of $X$ is called bounded if each $f\in C(X)$ is bounded on $A$. Every pseudocompact subset of $X$ is bounded. The following theorem of Mandelkar, proved in 1971 inform us that the converse of the last statement is true for a special class of subsets of $X$.

\begin{theorem}(\cite{M})\label{7}
A support in $X$ i.e. a subset of the form $\cl_X(X-Z(f)), f\in C(X)$ is bounded if and only if it is pseudocompact.
\end{theorem}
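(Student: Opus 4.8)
The plan is to prove Mandelkar's theorem~\ref{7} by exploiting the fact that a support $S = \cl_X(X - Z(f))$ is the closure of a cozero set, so it has a built-in countable exhaustion structure that forces pseudocompactness once boundedness is assumed. First I would dispose of the easy direction: every pseudocompact subset of $X$ is bounded, which is immediate since a continuous function restricted to a pseudocompact set has a pseudocompact (hence bounded) image in $\mathbb{R}$; this holds for arbitrary subsets and in particular for supports, so no properties of supports are needed here.

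For the substantive direction, suppose $S = \cl_X(X - Z(f))$ is bounded but not pseudocompact. Then there is some $g \in C(S)$ that is unbounded on $S$. The key step is to manufacture from $g$ a function in $C(X)$ that is unbounded on $S$, contradicting boundedness of $S$; this is where the hypothesis that $S$ is a \emph{support} (rather than an arbitrary closed set) is essential. The idea is that on the cozero set $\coz(f) = X - Z(f)$ the function $f$ is nonvanishing, so $1/f$ is continuous there; more importantly, I can use $f$ itself as a ``Urysohn-type'' multiplier. Concretely, I would first note that $S = \cl_X \coz(f)$, and that $g$ unbounded on $S$ means $g$ is unbounded on the dense subset $\coz(f)$ of $S$ (since $g$ is continuous on $S$, $\sup_S |g| = \sup_{\coz(f)} |g|$). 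Then consider the function $h$ on $X$ defined by $h = (g\!\restriction_{\coz f}) \cdot f$ on $\coz(f)$ and $h = 0$ on $Z(f)$; continuity of $h$ at points of $Z(f)$ is not automatic, so instead I would use the standard trick: replace $f$ by $f' = \min(|f|, 1) \cdot \operatorname{sign}$-adjusted version, or rather work with $|f| \wedge 1$, and build $h$ so that near $Z(f)$ the multiplier $f$ kills any bad behaviour of $g$. The cleanest route: since $g$ is unbounded on $\coz(f)$, pick points $x_n \in \coz(f)$ with $|g(x_n)| \to \infty$; the closed sets $\{x : |f(x)| \ge |g(x)|^{-1}\} \cap \{x: |g(x)| \ge n\}$ (taken within $\coz f$, then closed up) can be separated, and one assembles a function in $C(X)$ unbounded on $\{x_n\}$.

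Actually the slickest argument, and the one I would write up, is via $\beta X$: $S$ bounded means $\cl_{\beta X} S$ is contained in $\upsilon X$ (a subset $A$ of $X$ is bounded iff $\cl_{\beta X} A \subseteq \upsilon X$), while $S$ not pseudocompact means $S$ is not compact in a way detected by $C(X)$; combined with $S = \cl_X \coz f$ one shows $\cl_{\beta X} S = \cl_{\beta X} \coz f$, and for a cozero set $\coz f$ one has the identity $\cl_{\beta X} \coz f = \beta X \setminus \operatorname{int}_{\beta X} \cl_{\beta X} Z(f)$, from which $\cl_{\beta X} S$ is the closure of an open subset of $\beta X$; a closed subset of $\upsilon X$ that is the closure of an open set of $\beta X$ and is bounded must actually be compact, hence $S = \cl_X \coz f$ being $C$-embedded-friendly forces $S$ pseudocompact. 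I would then translate ``$\cl_{\beta X} S$ compact and $S$ dense in it via a cozero set'' back into pseudocompactness of $S$ itself using that a space is pseudocompact iff it is $G_\delta$-dense (or $z$-embedded-dense) in a compact space, applied to $\coz f \hookrightarrow \cl_{\beta X} S$.

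\textbf{Main obstacle.} The delicate point is the passage from ``$S$ is bounded'' to a statement about $\cl_{\beta X} S$ and then back to ``$S$ is pseudocompact'': boundedness of $S$ gives $\cl_{\beta X} S \subseteq \upsilon X$, but $\upsilon X$ need not be compact, so I cannot immediately conclude $\cl_{\beta X} S$ is compact — I must use the support structure ($S = \cl_X \coz f$) to show $\cl_{\beta X} S$ is also \emph{closed} in $\beta X$ in the relevant sense, or rather that $\cl_{\beta X} S$ is a $C^*$-embedded compactification of $S$ realizing $\beta S$, and that pseudocompactness of $S$ is equivalent to $\beta S = \cl_{\beta X} S$ having $S$ sitting inside it pseudocompactly. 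Getting this equivalence precise — identifying $\cl_{\beta X} S$ with $\beta S$ for a support $S$, which uses that supports of functions in $C(X)$ are $C^*$-embedded is the crux; once that identification is in hand, $S$ bounded $\Leftrightarrow$ every $f \in C(S)$ bounded (extend to $C^*$, then over $\beta S$) $\Leftrightarrow$ $S$ pseudocompact is formal. I expect roughly half the write-up to be devoted to this $C^*$-embedding/$\beta S = \cl_{\beta X}\coz f$ fact and the rest to be bookkeeping.
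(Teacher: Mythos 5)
The paper does not actually prove this statement: it is quoted from Mandelker \cite{M}, so there is no in-text argument to compare yours against; your sketch has to stand on its own, and it does not. The easy direction (pseudocompact $\Rightarrow$ bounded) is fine. The substantive direction fails at the same point in both of your proposed routes, namely the passage from an unbounded $g\in C(S)$ to an unbounded member of $C(X)$. In the ``concrete'' route, the step ``one assembles a function in $C(X)$ unbounded on $\{x_n\}$'' is precisely the content of the theorem and is false for general closed discrete sets: the right edge $\{\omega_1\}\times\omega_\circ$ of the Tychonoff plank (which this very paper invokes elsewhere) is an infinite, closed, discrete, \emph{bounded} subset, so no function of $C(X)$ is unbounded on it. Thus that step must exploit the hypothesis that $S$ is a support, and your sketch never does. (The multiplier idea cannot work even in principle: $g=1/f$ is unbounded on $X\setminus Z(f)$ while $gf\equiv 1$ is bounded.) The $\beta X$ route is confused: $\cl_{\beta X}S$ is \emph{automatically} compact, being closed in $\beta X$, so compactness of $\cl_{\beta X}S$ is never the issue; the issue is relating $C(S)$ to $C(X)$. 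The claim you lean on --- that supports are $C^*$-embedded, so that $\cl_{\beta X}S=\beta S$ --- is asserted without proof, is at least as hard as the theorem itself, and in any case $C^*$-embedding only controls \emph{bounded} functions on $S$, so it does not by itself convert boundedness of $S$ in $X$ into pseudocompactness of $S$.

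The missing idea, and the one Mandelker's argument actually turns on, is the pair of locally-finite-family characterizations: a set $A\subseteq X$ is bounded iff every locally finite family of open subsets of $X$, each meeting $A$, is finite; and a Tychonoff space $Y$ is pseudocompact iff every locally finite family of nonempty open subsets of $Y$ is finite. With these, the support hypothesis enters in one line. If $S=\cl_X(X\setminus Z(f))$ is not pseudocompact, take an infinite family $\{V_n\}$ of nonempty open subsets of $S$ that is locally finite in $S$, write $V_n=U_n\cap S$ with $U_n$ open in $X$, and set $W_n=U_n\cap(X\setminus Z(f))=V_n\cap(X\setminus Z(f))$. Each $W_n$ is open in $X$ and nonempty because the cozero set $X\setminus Z(f)$ is dense in $S$. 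The family $\{W_n\}$ is locally finite in $X$: points of $S$ are handled by local finiteness of $\{V_n\}$ in $S$ (any open $O$ meeting $W_m$ has $O\cap S$ meeting $V_m$), and every point of $X\setminus S$ has the open neighbourhood $X\setminus S$, which misses every $W_n$ since $W_n\subseteq S$. As each $W_n$ meets $S$, the set $S$ is not bounded. This last step is exactly where ``$S$ is the closure of an open set'' is indispensable, and it is exactly the gap your proposal leaves open.
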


We call a space $X$ locally pseudocompact( almost locally pseudocompact) if each point $x$ on it has an open neighbourhood $U_x$  whose closure is pseudocompact(respectively each nonempty open set $U$ in $X$ contains a nonempty open set $V$ such that $\cl_XV$ is pseudocompact).

Let us call a space $X$ locally bounded ( almost locally bounded ) if $X$ is locally-$\mathcal{B}$ ( respectively almost locally-$\mathcal{B}$ ), where $\mathcal{B}$ is the ideal of all closed bounded sets in $X$. Since each neighbourhood of a point in $X$ contains a cozero set neighbourhood of the same point, the following proposition is an easy consequence of Theorem \ref{7}. 
 
\begin{theorem}\label{8}
$X$ is locally pseudocompact/ almost locally pseudocompact if and only if $X$ is locally bounded ( respectively almost locally bounded).
\end{theorem}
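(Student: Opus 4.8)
The plan is to prove both biconditionals by the same two-step argument, running the ``locally'' and ``almost locally'' versions in parallel. For the forward implications I would simply invoke the fact recalled in the text that every pseudocompact subset of $X$ is bounded: if a point $x$ (respectively a nonempty open set $U$) admits an open neighbourhood (respectively contains a nonempty open subset) whose closure is pseudocompact, then that closure is automatically bounded, so $X$ is locally bounded (respectively almost locally bounded). These two directions require no further work.

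The substance lies in the converse implications, and the idea is to shrink an arbitrary neighbourhood with bounded closure to a cozero-set neighbourhood so that Mandelkar's Theorem \ref{7} becomes applicable. Suppose $X$ is locally bounded and fix $x\in X$; choose an open $U\ni x$ with $\cl_X U$ bounded. Since every neighbourhood of $x$ contains a cozero-set neighbourhood of $x$, pick $f\in C(X)$ with $x\in X-Z(f)\subseteq U$ and set $V=X-Z(f)$. Then $\cl_X V\subseteq\cl_X U$, and any subset of a bounded set is bounded (each $g\in C(X)$ bounded on $\cl_X U$ is a fortiori bounded on $\cl_X V$), so $\cl_X V$ is bounded. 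But $\cl_X V=\cl_X(X-Z(f))$ is precisely a support in the sense of Theorem \ref{7}, hence being bounded it is pseudocompact. Thus $V$ is an open neighbourhood of $x$ with pseudocompact closure, and $X$ is locally pseudocompact.

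For the almost-local version I would argue identically: given a nonempty open $U$, almost local boundedness furnishes a nonempty open $W\subseteq U$ with $\cl_X W$ bounded; choosing any $x\in W$ and a cozero neighbourhood $V=X-Z(f)$ of $x$ with $V\subseteq W$, the inclusion $\cl_X V\subseteq\cl_X W$ shows $\cl_X V$ is bounded, and Theorem \ref{7} again upgrades this to pseudocompactness; since $\emptyset\neq V\subseteq U$, this proves $X$ is almost locally pseudocompact. I do not anticipate a real obstacle; the only point needing care is the observation that the closure of a cozero set is exactly the kind of set to which Mandelkar's characterization applies, which is what permits trading ``bounded'' for ``pseudocompact''.
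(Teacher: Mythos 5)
Your proof is correct and is exactly the argument the paper intends: the forward directions follow from the fact that pseudocompact sets are bounded, and the converses follow by shrinking to a cozero-set neighbourhood so that Mandelkar's Theorem \ref{7} upgrades boundedness of the support $\cl_X(X-Z(f))$ to pseudocompactness. The paper only sketches this in one sentence ("Since each neighbourhood of a point in $X$ contains a cozero set neighbourhood of the same point, the following proposition is an easy consequence of Theorem \ref{7}"), and your write-up fills in precisely those details.
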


The following propositions now emerge as special cases of Theorem \ref{5} and  \ref{6} respectively.

\begin{theorem}\label{9}
The product space $X\equiv \prod_{\alpha\in\Lambda}X_\alpha$ is locally pseudocompact if and only if each $X_\alpha$ is locally pseudocompact and for all but finitely many $\alpha$'s, $X_\alpha$'s are pseudocompact.
\end{theorem}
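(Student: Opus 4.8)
The plan is to derive Theorem \ref{9} directly from Theorem \ref{5} and Theorem \ref{8}, with the only real work being the verification that the ideal $\mathcal{B}$ of closed bounded sets behaves well under the product construction of Theorem \ref{4}. First I would set $\mathcal{B}_\alpha$ to be the ideal of all closed bounded subsets of $X_\alpha$ and $\mathcal{B}$ the ideal of all closed bounded subsets of $X=\prod_{\alpha\in\Lambda}X_\alpha$, and argue that $\mathcal{B}=\prod_{\alpha\in\Lambda}\mathcal{B}_\alpha$ in the sense of Theorem \ref{4}. One inclusion is immediate: if $A\subseteq\prod_{\alpha}A_\alpha$ with each $A_\alpha$ closed and bounded in $X_\alpha$ and $A$ closed in $X$, then each $A_\alpha$ is pseudocompact by Mandelkar's Theorem \ref{7} applied coordinatewise (each $A_\alpha$ is a support, being the closure of a cozero set), hence $\prod_\alpha A_\alpha$ is pseudocompact by Glicksberg-type/standard product results on pseudocompactness, and therefore bounded; then $A$, being closed in a bounded set, is bounded. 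For the reverse inclusion, if $A$ is closed and bounded in $X$, then its projection $\pi_\alpha(A)$ is bounded in $X_\alpha$, so $A\subseteq\prod_\alpha\cl_{X_\alpha}\pi_\alpha(A)$ with each factor closed and bounded, which is exactly the form required by Theorem \ref{4}.

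Once $\mathcal{B}=\prod_\alpha\mathcal{B}_\alpha$ is established, Theorem \ref{5} applied with $\mathcal{P}_\alpha=\mathcal{B}_\alpha$ and $\mathcal{P}=\mathcal{B}$ says that $X$ is locally-$\mathcal{B}$ if and only if each $X_\alpha$ is locally-$\mathcal{B}_\alpha$ and $X_\alpha\in\mathcal{B}_\alpha$ for all but finitely many $\alpha$. Translating through Theorem \ref{8}: ``$X$ is locally-$\mathcal{B}$'' becomes ``$X$ is locally pseudocompact'', ``$X_\alpha$ is locally-$\mathcal{B}_\alpha$'' becomes ``$X_\alpha$ is locally pseudocompact'', and ``$X_\alpha\in\mathcal{B}_\alpha$'' — i.e. $X_\alpha$ is a closed bounded subset of itself, hence bounded, hence (again by Mandelkar, since $X_\alpha=\cl_{X_\alpha}(X_\alpha - Z(0))$ fails but one can instead note $X_\alpha$ is a cozero-set closure of a constant nonzero function, or simply that $X_\alpha$ pseudocompact $\iff$ $X_\alpha$ bounded for the whole space) becomes ``$X_\alpha$ is pseudocompact''. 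This yields exactly the stated equivalence.

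The step I expect to be the main obstacle is the identity $\mathcal{B}=\prod_\alpha\mathcal{B}_\alpha$, specifically the direction requiring that an arbitrary product $\prod_\alpha A_\alpha$ of pseudocompact (equivalently bounded closed) sets be bounded in $X$. A clean way around any appeal to deep product theorems for pseudocompactness is to argue boundedness directly: given $f\in C(X)$ and a point of $\prod_\alpha A_\alpha$, $f$ depends (up to small error, via uniform continuity on compacta of the range) on countably many coordinates only in a limited sense — more robustly, one uses that for each $f\in C(X)$ and each $n$, the set where $|f|\ge n$ is a zero-set disjoint from a neighborhood structure, and boundedness of each $A_\alpha$ forces $f$ to be bounded on finite subproducts, then a standard argument extends this to the full product. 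Alternatively, and most economically, I would invoke Theorem \ref{7} in the product space itself: $\prod_\alpha A_\alpha = \cl_X(X - Z(g))$ for a suitable $g\in C(X)$ (take $g$ to be, on coordinate $\alpha$, a function whose cozero-set closure is $A_\alpha$, combined appropriately), reducing boundedness of the product to pseudocompactness of the product, which for closures of cozero sets is exactly what Mandelkar guarantees once we know each factor is pseudocompact. I would flag this as the one place deserving a careful line or two, and otherwise present the proof as a short composition of Theorems \ref{4}, \ref{5}, \ref{7}, and \ref{8}.
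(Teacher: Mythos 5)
Your overall strategy — translate ``locally pseudocompact'' into ``locally-$\mathcal{B}$'' via Theorem \ref{8} and then invoke Theorem \ref{5} for the ideals $\mathcal{B}_\alpha$ of closed bounded sets — is exactly the route the paper intends (the paper gives no more detail than ``special case of Theorem \ref{5}''), and you have correctly identified the one nontrivial step, namely the relation between $\prod_\alpha\mathcal{B}_\alpha$ and the ideal $\mathcal{B}$ of closed bounded subsets of the product. Your verification of $\mathcal{B}\subseteq\prod_\alpha\mathcal{B}_\alpha$ via projections is fine, and it yields the ``only if'' direction of Theorem \ref{9} correctly. The problem is the other inclusion, and here your argument contains two genuine errors. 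First, an arbitrary closed bounded set $A_\alpha$ is \emph{not} in general a support: Theorem \ref{7} applies only to sets of the form $\cl_{X}(X\setminus Z(f))$, and a closed bounded set need not be the closure of a cozero set (a non-isolated point of $\mathbb{R}$ is already a counterexample), so you cannot conclude that each $A_\alpha$ is pseudocompact. Second, and fatally, pseudocompactness is not productive. Glicksberg's theorem characterizes when $\beta(X\times Y)=\beta X\times\beta Y$; it does not assert that a product of pseudocompact spaces is pseudocompact, and the classical Nov\'ak--Terasaka examples give countably compact (hence pseudocompact) spaces $X_1,X_2$ with $X_1\times X_2$ not pseudocompact. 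Taking $A_i=X_i$ there shows that a product of bounded sets need not be bounded in the product (boundedness of the whole space in itself \emph{is} pseudocompactness), so the identity $\mathcal{B}=\prod_\alpha\mathcal{B}_\alpha$ is not merely unproven in your write-up: it is false.

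The fallback sketches in your last paragraph do not repair this. Writing $\prod_\alpha A_\alpha$ as a support of the product and then invoking Mandelker only converts ``bounded in $\prod_\alpha X_\alpha$'' into ``pseudocompact'' and back \emph{within the product}; it never converts ``each factor is pseudocompact'' into ``the product is pseudocompact,'' which is the statement you actually need and which fails in general. So the ``if'' direction of Theorem \ref{9} is not established by this argument; the already special case ``all $X_\alpha$ pseudocompact $\Rightarrow$ $\prod_\alpha X_\alpha$ locally pseudocompact'' contains the whole difficulty, since any open neighbourhood in the product has all but finitely many factors equal to the full (pseudocompact but possibly badly behaved) spaces $X_\alpha$. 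To be fair, the paper's own one-line derivation passes over exactly the same point, so you have located the crux precisely; but a correct proof would have to supply an argument that genuinely circumvents the non-productivity of pseudocompactness (or of boundedness), and the one you propose does not.
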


\begin{theorem}
The product space $X\equiv \prod_{\alpha\in\Lambda}X_\alpha$ is almost locally pseudocompact if and only if each $X_\alpha$ is almost locally pseudocompact and for all but finitely many $\alpha$'s, $X_\alpha$'s are pseudocompact.
\end{theorem}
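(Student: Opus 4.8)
The plan is to derive this from Theorem~\ref{6} in exactly the way Theorem~\ref{9} is derived from Theorem~\ref{5}. For each $\alpha\in\Lambda$ let $\mathcal{B}_\alpha\in\Omega(X_\alpha)$ be the ideal of all closed bounded subsets of $X_\alpha$, and let $\mathcal{B}\in\Omega(X)$ be the ideal of all closed bounded subsets of $X=\prod_{\alpha\in\Lambda}X_\alpha$. Two elementary translations will be used throughout. First, a space $Y$ is bounded as a subset of itself precisely when every $f\in C(Y)$ is bounded on $Y$, i.e.\ precisely when $Y$ is pseudocompact; hence $X_\alpha\in\mathcal{B}_\alpha$ if and only if $X_\alpha$ is pseudocompact. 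Secondly, Theorem~\ref{8} says that a space is almost locally pseudocompact if and only if it is almost locally-$\mathcal{B}$, where $\mathcal{B}$ is the ideal of its own closed bounded sets; applied to each $X_\alpha$ and to $X$, this lets me trade ``almost locally pseudocompact'' for ``almost locally bounded'' wherever convenient.

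The heart of the argument is the identity $\prod_{\alpha\in\Lambda}\mathcal{B}_\alpha=\mathcal{B}$. By Theorem~\ref{4}, $\prod_{\alpha\in\Lambda}\mathcal{B}_\alpha$ is the collection of closed subsets of $X$ that lie inside some box $\prod_{\alpha\in\Lambda}A_\alpha$ with each $A_\alpha$ closed and bounded in $X_\alpha$. The inclusion $\mathcal{B}\subseteq\prod_{\alpha\in\Lambda}\mathcal{B}_\alpha$ is easy: if $A$ is closed and bounded in $X$ and $A_\alpha:=\cl_{X_\alpha}\pi_\alpha(A)$, then for $g\in C(X_\alpha)$ the composite $g\circ\pi_\alpha\in C(X)$ is bounded on $A$, so $g$ is bounded on $\pi_\alpha(A)$ and hence on $A_\alpha$; thus $A_\alpha\in\mathcal{B}_\alpha$ and $A\subseteq\prod_{\alpha\in\Lambda}A_\alpha$. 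For the reverse inclusion one must check that a box $\prod_{\alpha\in\Lambda}A_\alpha$ of closed bounded sets is itself bounded in $X$; here I would use that every open set contains a cozero open set, so that in the definitions of ``almost locally-$\mathcal{B}$'' and ``almost locally-$\prod_{\alpha\in\Lambda}\mathcal{B}_\alpha$'' the witnessing $V$ and each coordinate $V_\alpha$ may be taken to be cozero sets with $V_\alpha=X_\alpha$ for all but finitely many $\alpha$. Then $\cl_X V$ and each $\cl_{X_\alpha}V_\alpha$ is a support, and Mandelker's Theorem~\ref{7} permits ``bounded'' to be replaced by ``pseudocompact'' at every step, reducing matters to products in which only finitely many factors differ from the full $X_\alpha$.

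With the identity in hand the theorem is immediate. Theorem~\ref{6}, applied to the family $\{\mathcal{B}_\alpha\}$, says that $X$ is almost locally-$\prod_{\alpha\in\Lambda}\mathcal{B}_\alpha$ if and only if each $X_\alpha$ is almost locally-$\mathcal{B}_\alpha$ and $X_\alpha\in\mathcal{B}_\alpha$ for all but finitely many $\alpha$. Rewriting the left-hand side by the identity $\prod_{\alpha\in\Lambda}\mathcal{B}_\alpha=\mathcal{B}$ together with Theorem~\ref{8}, and the right-hand side by the two elementary translations above, yields exactly the assertion of the theorem. The step I expect to be the real obstacle is the inclusion $\prod_{\alpha\in\Lambda}\mathcal{B}_\alpha\subseteq\mathcal{B}$, i.e.\ showing that a box of closed bounded sets is bounded in the product: compactness is productive, which is why the corresponding statement for locally compact spaces costs nothing, but boundedness and pseudocompactness are not productive in general, so one genuinely has to exploit the cozero-box reduction and Mandelker's theorem to force the relevant product of supports to be pseudocompact. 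This is the precise analogue of the delicate point already met in the proof of Theorem~\ref{9}.
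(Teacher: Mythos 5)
Your overall strategy---specialize Theorem~\ref{6} to $\mathcal{P}_\alpha=\mathcal{B}_\alpha$ and translate back and forth using Theorem~\ref{8} and Mandelker's Theorem~\ref{7}---is exactly the route the paper intends (the paper itself supplies no further detail), and your correctly proved inclusion $\mathcal{B}\subseteq\prod_{\alpha}\mathcal{B}_\alpha$ already delivers the ``only if'' half of the theorem. The problem is the pivot of your argument: the identity $\prod_{\alpha}\mathcal{B}_\alpha=\mathcal{B}$ is false, and the step you yourself single out as ``the real obstacle'' is not a detail awaiting verification but a statement that fails. The inclusion $\prod_{\alpha}\mathcal{B}_\alpha\subseteq\mathcal{B}$ asserts that a box of closed bounded sets is bounded in the product. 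Taking $A_\alpha=X_\alpha$ for every $\alpha$, this would force every product of pseudocompact spaces to be pseudocompact, which is classically false (Nov\'{a}k--Terasaka: there exist pseudocompact $X_1$, $X_2$ with $X_1\times X_2$ not pseudocompact; then $X_1\times X_2$ lies in $\mathcal{B}_1\times\mathcal{B}_2$ but not in $\mathcal{B}$). So no argument, cozero-box or otherwise, can establish the identity you rely on.

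What you actually need is only the weaker implication that ``almost locally-$\prod_{\alpha}\mathcal{B}_\alpha$'' implies ``almost locally-$\mathcal{B}$'', but your proposed cozero-box reduction does not secure even this. That reduction replaces the witnessing open set by a basic cozero box $\prod_{\alpha}W_\alpha$ with $W_\alpha=X_\alpha$ off a finite set; its closure is $\prod_{\alpha}\cl_{X_\alpha}W_\alpha$, a product of pseudocompact supports with cofinitely many factors equal to whole pseudocompact spaces, and Mandelker's theorem then demands that this product be bounded in $X$---which is once more the failed productivity of pseudocompactness (the same Nov\'{a}k pair, with $W_\alpha=X_\alpha$ throughout, defeats the generic choice; in that particular example one is rescued only by the isolated points of $\mathbb{N}\times\mathbb{N}$, a feature the general argument cannot invoke). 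Thus the ``if'' direction of the theorem does not follow from the machinery you deploy: either the hypothesis must be exploited in a way that goes beyond choosing basic cozero boxes, or a genuinely different argument is required for that direction. As written, the proof has a real gap at precisely the point you identified, and the proposed repair cannot close it.
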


We would like to mention at this point that, since the closed pseudocompact subspaces of a space $X$ may not make an ideal of closed sets, indeed a closed subset of a pseudocompact space need not be pseudocompact (vide: the right edge $\{\omega_1\}\times \omega_\circ$ of the Tychonoff plank $T\equiv [0,\omega_1]\times [0,\omega_\circ]\setminus\{(\omega_1,\omega_\circ)\}$ is not pseudocompact), we cannot deduce the Theorem \ref{8} and \ref{9} directly from the general propositions \ref{5} and \ref{6} respectively. Incidentally the following example of an almost locally compact space which is not locally compact is constructed in \cite{A}.

\begin{example}
 Let $X$ be an uncountable set, each point of which excepting a point $p\in X$ is isolated and a typical neighbourhood of $p$ is a set $A$ containing $p$ such that $X\setminus A$ is at most countable set.
\end{example}

The following example is quite analogous to the above one.

\begin{example}	
  Let $X$ be an uncountable set with $\lvert X\rvert >\omega_1$ with the following topology: each point of $X$, barring a distinguished point $p$ is isolated and a typical neighbourhood of $p$ is a set $A$ containing $p$ for which $\lvert X\setminus A\rvert \leq \omega_1$. It is easy to check that if $\mathcal{P}$ is any one of the three ideals of closed sets viz $\mathfrak{F}, \mathfrak{B}$ and $\mathfrak{L}$, then $X$ is almost locally-$\mathcal{P}$ without being locally-$\mathcal{P}$. Here $\mathfrak{F}$ stands for the ideal of all finite subsets of $X$ and $\mathfrak{L}$ that for all closed Lindelof subsets of $X$.
\end{example}

\section{On the question of freeness and essentiality of ideals ideals $C_\mathcal{P}(X)$ and $C^{\mathcal{P}}_{\infty}(X)$} 
The following proposition relates the local $\mathcal{P}$-condition on $X$ with the freeness of the ideals $C_\mathcal{P}(X)$ and $C^{\mathcal{P}}_{\infty}(X)$.

\begin{theorem}
For $\mathcal{P}\in\Omega(X)$, the following three statements are equivalent:
\begin{enumerate}
\item $X$ is locally-$\mathcal{P}$.
\item $C_\mathcal{P}(X)$ is a free ideal of $C(X)$.
\item $C^{\mathcal{P}}_\infty(X)$ is a free ideal of $C^{\mathcal{P}}_{\infty}(X)+C^*(X)$.
\end{enumerate}
\end{theorem}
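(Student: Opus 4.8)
The plan is to prove the chain of equivalences $(1)\Rightarrow(2)\Rightarrow(3)\Rightarrow(1)$, exploiting the inclusions $C_\mathcal{P}(X)\subseteq C^\mathcal{P}_\infty(X)$ and the fact that $C^\mathcal{P}_\infty(X)$ is an ideal of $C^\mathcal{P}_\infty(X)+C^*(X)$ noted in the introduction. Recall that a nonzero ideal $I$ in a commutative ring $R$ (possibly without identity) is free when $\bigcap_{f\in I}Z(f)=\emptyset$; here the zero sets are taken in $X$ in all three cases, since every function in sight is an element of $C(X)$.

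For $(1)\Rightarrow(2)$: assume $X$ is locally-$\mathcal{P}$. Fix $x\in X$; by hypothesis there is an open neighbourhood $U$ of $x$ with $\cl_X U\in\mathcal{P}$. By complete regularity choose $f\in C(X)$ with $f(x)=1$ and $f\equiv 0$ on $X\setminus U$, so that $X-Z(f)\subseteq U$ and hence $\cl_X(X-Z(f))\subseteq\cl_X U\in\mathcal{P}$; condition (2) of the definition of an ideal of closed sets gives $\cl_X(X-Z(f))\in\mathcal{P}$, i.e. $f\in C_\mathcal{P}(X)$, and $x\notin Z(f)$. Since $x$ was arbitrary, $\bigcap_{f\in C_\mathcal{P}(X)}Z(f)=\emptyset$, so $C_\mathcal{P}(X)$ is free (it is nonzero because $X\ne\emptyset$).

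For $(2)\Rightarrow(3)$: this is immediate from $C_\mathcal{P}(X)\subseteq C^\mathcal{P}_\infty(X)$. If $\bigcap_{f\in C_\mathcal{P}(X)}Z(f)=\emptyset$ then a fortiori $\bigcap_{f\in C^\mathcal{P}_\infty(X)}Z(f)=\emptyset$, and $C^\mathcal{P}_\infty(X)$ is a nonzero ideal of $C^\mathcal{P}_\infty(X)+C^*(X)$ by the remarks in the introduction, so it is a free ideal of that ring.

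For $(3)\Rightarrow(1)$: assume $C^\mathcal{P}_\infty(X)$ is free and fix $x\in X$. Freeness produces $f\in C^\mathcal{P}_\infty(X)$ with $f(x)\ne 0$; put $\epsilon=|f(x)|/2>0$. Then $A=\{y\in X:|f(y)|\geq\epsilon\}\in\mathcal{P}$ by definition of $C^\mathcal{P}_\infty(X)$, and $U=\{y\in X:|f(y)|>\epsilon\}$ is an open neighbourhood of $x$ with $\cl_X U\subseteq A$ (since $A$ is closed), whence $\cl_X U\in\mathcal{P}$. Thus $X$ is locally-$\mathcal{P}$. I expect $(3)\Rightarrow(1)$ to be the only step requiring genuine care: one must be sure to pass to the strict-inequality open set $U$ rather than the closed set $A$ to obtain an honest open neighbourhood, and to invoke the downward-closure axiom (2) for $\mathcal{P}$ when discarding the subset relation; the other two implications are essentially formal.
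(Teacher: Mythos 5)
Your proposal is correct and follows essentially the same route as the paper: complete regularity for $(1)\Rightarrow(2)$, the inclusion $C_\mathcal{P}(X)\subseteq C^\mathcal{P}_\infty(X)$ for $(2)\Rightarrow(3)$, and the set $\{y:|f(y)|>|f(x)|/2\}$ with closure inside the $\mathcal{P}$-member $\{y:|f(y)|\geq|f(x)|/2\}$ for $(3)\Rightarrow(1)$. The extra care you flag about passing to the strict-inequality open set and invoking the downward-closure axiom is exactly what the paper's argument does implicitly.
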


\begin{proof}
$(1)\Rightarrow (2):$ Let $X$ be locally-$\mathcal{P}$ and $x\in X$. Then there exists an open neighbourhood $U$ of $x$ such that $\cl_XU\in\mathcal{P}$. By using the complete regularity of $X$, we can find an $f\in C^*(X)$ such that $f(X\setminus U)=0$ and $f(x)=1$. This implies that $X\setminus Z(f)\subseteq U$ and consequently $\cl_X(X\setminus Z(f))\subseteq \cl_X U$ and hence $\cl_X(X\setminus Z(f))\in\mathcal{P}$ i.e. $f\in C_{\mathcal{P}}(X)$, we note that $f(x)\neq 0$.

$(2)\Rightarrow (3):$ is trivial as $C_\mathcal{P}(X)\subseteq C^{\mathcal{P}}_{\infty} (X)$.

$(3)\Rightarrow (1):$ Let $(3)$ hold and $x\in X$. Then there exists $f\in C^{\mathcal{P}}_{\infty}(X)$ such that $f(x)\neq 0$, say $\lvert f(x)\rvert =\lambda$. Take $U=\{y\in X:\lvert f(y)\rvert >\frac{\lambda}{2}\}$, which is a nonempty open set. Since $\cl_XU\subseteq \{ y\in X:\lvert f(y)\vert\geq \frac{\lambda}{2}\}$ and the last set is a member of $\mathcal{P}$, it follows that $\cl_XU\in\mathcal{P}$. Thus $X$ becomes locally-$\mathcal{P}$.
\end{proof}

We would like to mention that a variant condition using a different terminology both necessary and sufficient for the freeness of the ideal $C_\mathcal{P}(X)$ in $C(X)$ have also been discovered in [\cite{T}, proposition 2.11]. 

\begin{remark}
With $\mathcal{P}\equiv \mathcal{K}\equiv $ the ideal of compact sets in $X$, the above theorem reads: $X$ is locally compact if and only if $C_K(X)$ is a free ideal of $C(X)$ if and only if $C_\infty (X)$ is a free ideal of $C^*(X)$, a standard result in Rings of Continuous Functions [see \cite{G}].
\end{remark}

\begin{remark}\label{10}
 On choosing $\mathcal{P}\equiv\mathcal{B}$, it follows from Mandelkar's theorem 3.4 that $C_\psi(X)=C_\mathcal{B}(X)$ and $C^\psi_\infty(X)=C^\mathcal{B}_\infty(X)$. Thus with this particular choice of $\mathcal{P}$, Theorem 4.1 states that $X$ is locally pseudocompact when and only when $C_\psi(X)$ is a free ideal of $C(X)$, equivalently $C^\psi_\infty(X)$ is a free ideal of $C^*(X)$.
\end{remark} 
 Before embarking on the question of essentiality of the ideals $C_\mathcal{P}(X)$ and $C^\mathcal{P}_\infty(X)$, we write down the following topological description of essential ideals of an intermediate ring. The proof is accomplished by following the technique of \cite{A}.

\begin{theorem}\label{11}
	A nonzero ideal $I$ in an intermediate ring $A(X)$ is essential in this ring if and only if $\cap Z[I]\equiv \cap_{f\in I}Z(f)$ is a nowhere dense subset of $X$.
\end{theorem}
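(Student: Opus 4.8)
The plan is to prove both implications by translating the ring-theoretic notion of essentiality into the topological statement about $\bigcap Z[I]$, using the characterization of essential ideals in commutative rings recalled in the introduction: a nonzero ideal $I$ is essential iff for every $g \in A(X)$ with $g \neq 0$ there is $h \in A(X)$ with $gh \neq 0$ and $gh \in I$. First I would set $Z_0 = \bigcap_{f \in I} Z(f)$, which is a zero set (or at least a closed set) in $X$, and observe that since $A(X)$ contains $C^*(X)$, we have plenty of bounded functions to work with when constructing the required products.

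For the ``if'' direction, assume $Z_0$ is nowhere dense and let $g \in A(X)$ be nonzero. Then $X \setminus Z(g)$ is a nonempty open set, and since $Z_0$ is nowhere dense, $X \setminus Z(g)$ is not contained in $Z_0$; hence there is a point $p \in X \setminus Z(g)$ with $p \notin Z_0$, so some $f \in I$ has $f(p) \neq 0$. Using complete regularity, I would produce a bounded continuous function $u$ that is supported near $p$ and nonzero at $p$ (so $u \in C^*(X) \subseteq A(X)$), and consider $h = uf$. Since $I$ is an ideal of $A(X)$ and $f \in I$, $u \in A(X)$, we get $h = uf \in I$; moreover $gh = g u f$ can be arranged to be nonzero at $p$ by choosing the support of $u$ small enough that $g$, $f$ stay away from $0$ there. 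That gives $gh \neq 0$ and $gh \in I$, so $I$ is essential.

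For the ``only if'' direction, argue by contraposition: suppose $Z_0$ is \emph{not} nowhere dense, i.e.\ $\mathrm{int}_X(\mathrm{cl}_X Z_0) \neq \emptyset$. Since $Z_0$ is closed this means $\mathrm{int}_X Z_0 \neq \emptyset$; pick a nonempty open $U \subseteq Z_0$ and, by complete regularity, a nonzero $g \in C^*(X) \subseteq A(X)$ with $X \setminus Z(g) \subseteq U \subseteq Z_0$. Then for every $f \in I$ we have $f \equiv 0$ on $X \setminus Z(g) \subseteq Z_0$, so $gf = 0$ for all $f \in I$; more generally for any $h \in A(X)$, $gh$ vanishes off $U$ while every element of $I$ vanishes on $U$, so $(gh)f = 0$ for all $f\in I$. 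Hence the principal ideal generated by $g$ meets $I$ only in $0$, witnessing that $I$ is not essential.

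The main obstacle I anticipate is the bookkeeping in the ``if'' direction: one must be careful that the auxiliary function $u$ used to localize near $p$ genuinely lies in $A(X)$ (it does, being in $C^*(X)$) and that the product $guf$ is nonzero \emph{as an element of $A(X)$}, i.e.\ nonzero at the chosen point, which requires shrinking the neighbourhood so that $g$ and $f$ are simultaneously bounded away from zero there — a routine use of continuity and of the fact that $p \notin Z(g)$ and $f(p) \neq 0$. A secondary point worth stating explicitly is that $Z_0$, being an intersection of zero sets, is closed, so ``nowhere dense'' is equivalent to ``empty interior'', which is what makes both directions go through cleanly.
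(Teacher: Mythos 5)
Your proof is correct and follows essentially the same route the paper intends (it only cites the technique of Azarpanah's essential-ideals paper, which is exactly this argument: closed sets are nowhere dense iff they have empty interior, a witness point off $\bigcap Z[I]$ in the cozero set of $g$ gives $gf\neq 0$ with $gf\in I$, and a function supported inside $\mathrm{int}_X\bigl(\bigcap Z[I]\bigr)$ generates an ideal meeting $I$ trivially). One minor simplification: the auxiliary function $u$ in the ``if'' direction is unnecessary, since $g(p)\neq 0$ and $f(p)\neq 0$ already give $(gf)(p)\neq 0$ with $gf\in I$.
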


It follows from this Theorem that every free ideal of $A(X)$ is an essential ideal. Also for any $\mathcal{P}\in\Omega (X)$, the local $\mathcal{P}$ condition on $X$ is stronger than almost local $\mathcal{P}$. The following proposition which correlates almost local $\mathcal{P}$ requirement with essentiality of the ideals $C_{\mathcal{P}}(X)$ and $C^\mathcal{P}_\infty(X)$ seems therefore natural.

See also the proposition 3.2 in \cite{T} in this context.
\begin{theorem}\label{12}
For any $\mathcal{P}\in\Omega(X)$, the following statements are equivalent:
\begin{enumerate}
\item $X$ is almost locally-$\mathcal{P}$.
\item $C_{\mathcal{P}}(X)$ is an essential ideal of $C(X)$.
\item $C^\mathcal{P}_\infty(X)$ is an essential ideal of $C^*(X)+C^\mathcal{P}_\infty(X)$.
\end{enumerate}
\end{theorem}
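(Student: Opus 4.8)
The plan is to prove the cycle $(1)\Rightarrow(2)\Rightarrow(3)\Rightarrow(1)$, using Theorem \ref{11} as the main engine for the equivalence between $(1)$ and $(2)$, and exploiting the containment $C_\mathcal{P}(X)\subseteq C^\mathcal{P}_\infty(X)$ together with the fact (noted in the introduction) that $C^\mathcal{P}_\infty(X)$ is an ideal of the $C$-type ring $C^*(X)+C^\mathcal{P}_\infty(X)$ to transfer essentiality between the two settings.

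For $(1)\Rightarrow(2)$: by Theorem \ref{11} applied to the intermediate ring $C(X)$, it suffices to show $\bigcap_{f\in C_\mathcal{P}(X)}Z(f)$ is nowhere dense in $X$. Given a nonempty open $U$, almost local $\mathcal{P}$ furnishes a nonempty open $V\subseteq U$ with $\cl_X V\in\mathcal{P}$; complete regularity then produces $f\in C^*(X)$ with $f$ not identically zero on $V$ and $f\equiv 0$ off $V$, so that $\cl_X(X\setminus Z(f))\subseteq\cl_X V\in\mathcal{P}$, hence $f\in C_\mathcal{P}(X)$ and $Z(f)\not\supseteq V$. This shows no nonempty open set is contained in $\bigcap Z[C_\mathcal{P}(X)]$, i.e. that intersection has empty interior; since it is closed, it is nowhere dense, and $C_\mathcal{P}(X)$ is essential (it is nonzero provided $X$ contains a nonempty open set with closure in $\mathcal{P}$, which almost local $\mathcal{P}$ guarantees unless $X=\emptyset$).

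For $(2)\Rightarrow(3)$: suppose $C_\mathcal{P}(X)$ is essential in $C(X)$. I would first argue $C^\mathcal{P}_\infty(X)$ is essential in $C(X)$ as well — this is immediate from $C_\mathcal{P}(X)\subseteq C^\mathcal{P}_\infty(X)$, since any ideal containing an essential ideal is essential. Then I must descend from essentiality in $C(X)$ to essentiality in the smaller ring $R\equiv C^*(X)+C^\mathcal{P}_\infty(X)$. Here the cleanest route is via Theorem \ref{11} again: I would show directly that $\bigcap Z[C^\mathcal{P}_\infty(X)]$ is nowhere dense in $X$, which is a statement about $X$ alone and does not reference the ambient ring, so the same zero-set computation as in $(1)\Rightarrow(2)$ (using that $f\in C^\mathcal{P}_\infty(X)$ whenever $f\in C^*(X)$ vanishes off a set whose closure lies in $\mathcal{P}$, in fact whenever $\{|f|\ge\epsilon\}\in\mathcal{P}$ for all $\epsilon$) shows nowhere density, and Theorem \ref{11} applied to the intermediate ring $R$ yields essentiality of $C^\mathcal{P}_\infty(X)$ in $R$. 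In fact this observation shows $(2)\Rightarrow(3)$ factors through $(1)$, so one could equally well organize the argument so that both $(2)$ and $(3)$ are proved equivalent to the nowhere-density of $\bigcap Z[C_\mathcal{P}(X)]$ (which one checks coincides, up to nowhere density, with $\bigcap Z[C^\mathcal{P}_\infty(X)]$).

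For $(3)\Rightarrow(1)$: assuming $C^\mathcal{P}_\infty(X)$ is essential in $R$, Theorem \ref{11} gives that $\bigcap Z[C^\mathcal{P}_\infty(X)]$ is nowhere dense in $X$. Take any nonempty open $U$; since the complement of a nowhere dense set is dense, $U\not\subseteq\bigcap Z[C^\mathcal{P}_\infty(X)]$, so there is $f\in C^\mathcal{P}_\infty(X)$ and a point $x\in U$ with $f(x)\neq 0$; shrinking as in the proof of Theorem 4.1, the set $V=\{y\in X:|f(y)|>|f(x)|/2\}\cap U$ is a nonempty open subset of $U$ with $\cl_X V\subseteq\{y:|f(y)|\ge|f(x)|/2\}\in\mathcal{P}$, so $\cl_X V\in\mathcal{P}$. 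Hence $X$ is almost locally-$\mathcal{P}$. The main obstacle, and the only genuinely ring-theoretic point, is the transfer of essentiality across the change of ambient ring in $(2)\Rightarrow(3)$ and $(3)\Rightarrow(1)$; handling this cleanly is exactly what Theorem \ref{11} is designed for, since it reduces essentiality in any intermediate ring to the single topological condition that $\bigcap Z[I]$ be nowhere dense in $X$, a condition insensitive to which intermediate ring one works in. One should double-check that $C^\mathcal{P}_\infty(X)$ is genuinely an ideal of $R$ and is nonzero — both recorded in the introduction — so that Theorem \ref{11} applies verbatim.
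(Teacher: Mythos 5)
Your proposal is correct and follows essentially the same route as the paper: the same Urysohn-function construction on a small open set with closure in $\mathcal{P}$ drives $(1)\Rightarrow(2)$, the containment $C_\mathcal{P}(X)\subseteq C^\mathcal{P}_\infty(X)$ handles $(2)\Rightarrow(3)$, and Theorem \ref{11} (nowhere density of $\cap Z[I]$) handles $(3)\Rightarrow(1)$, exactly as the paper indicates. The only cosmetic difference is that you route $(1)\Rightarrow(2)$ through Theorem \ref{11} as well, where the paper verifies essentiality directly by exhibiting $g$ with $fg\neq 0$; your write-up also usefully fills in the details that the paper leaves as a sketch for $(2)\Rightarrow(3)$ and $(3)\Rightarrow(1)$.
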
  

\begin{proof}
$(1)\Rightarrow(2):$ Suppose that $X$ is almost locally-$\mathcal{P}$. Let $f\neq 0$ be a non unit function in $C(X)$. We shall show that there is a $g\in C_\mathcal{P}(X)$ such that $fg\neq 0$. By using the regularity of $X$ and the local $\mathcal{P}$ condition on it, we can find nonempty open sets $A$ and $B$ in $X$ such that $\cl_XB\subseteq A\subseteq \cl_XA\subseteq X\setminus Z(f)$ and $\cl_XB\in\mathcal{P}$. We now use the complete regularity of $X$ to produce a $g\in C^*(X)$ such that $g(X\setminus B)=0$ and $g(b)=1$ for some point $b\in B$. Hence $X\setminus Z(g)\subseteq B$. Consequently $\cl_X(X\setminus Z(g))\subseteq \cl_X B$. Since $\cl_XB\in\mathcal{P}$, it follows that $\cl_X(X\setminus Z(g))\mathcal{P}$ i.e. $g\in C_\mathcal{P}(X)$. We observe that $f(b)g(b)\neq 0$.

$(2)\Rightarrow (3):$ This is trivial because $C_\mathcal{P}(X)\subseteq C^\mathcal{P}_\infty(X)$.

$(3)\Rightarrow (1):$ The proof is accomplished on using Theorem \ref{11}, taking care of the fact that if $f\in C^\mathcal{P}_\infty(X)$ and $g\in C^*(X)$, then $fg\in C^\mathcal{P}_\infty(X)$ and following the arguments in \cite{AZ}, closely.
\end{proof}

The following proposition is a consequence of Theorem \ref{8}, first two lines in Remark \ref{10} and Theorem \ref{12}.

\begin{theorem}
The statements written below are equivalent.
\begin{enumerate}
\item $X$ is almost locally pseudocompact.
\item $C_\psi(X)$ is an essential ideal of $C(X)$.
\item $C^\psi_\infty(X)$ is an essential ideal of $C^*(X)$.
\end{enumerate}
\end{theorem}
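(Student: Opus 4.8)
The plan is to obtain everything by specialising Theorem \ref{12} to the single choice $\mathcal{P}\equiv\mathcal{B}$, the ideal of all closed bounded subsets of $X$, and then translating each of its three clauses into the language of the present statement using Theorem \ref{8} and Remark \ref{10}. So I would begin by writing down what Theorem \ref{12} says for $\mathcal{P}=\mathcal{B}$: the three conditions ``$X$ is almost locally-$\mathcal{B}$'', ``$C_{\mathcal{B}}(X)$ is an essential ideal of $C(X)$'', and ``$C^{\mathcal{B}}_\infty(X)$ is an essential ideal of $C^*(X)+C^{\mathcal{B}}_\infty(X)$'' are mutually equivalent.

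Next I would match the first two clauses. By Theorem \ref{8}, $X$ is almost locally pseudocompact if and only if $X$ is almost locally bounded, i.e.\ almost locally-$\mathcal{B}$, so clause (1) here is clause (1) of the specialised Theorem \ref{12}. By the first two lines of Remark \ref{10} (which rest on Mandelkar's Theorem \ref{7}) one has $C_\psi(X)=C_{\mathcal{B}}(X)$ and $C^\psi_\infty(X)=C^{\mathcal{B}}_\infty(X)$; hence ``$C_{\mathcal{B}}(X)$ is an essential ideal of $C(X)$'' is literally clause (2) here.

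The only point that needs an argument is the ambient ring in clause (3). I would observe that $C^\psi_\infty(X)\subseteq C^*(X)$: if $f\in C^\psi_\infty(X)=C^{\mathcal{B}}_\infty(X)$, then $\{x\in X:\lvert f(x)\rvert\geq 1\}$ is a closed bounded set, so the continuous function $f$ is bounded on it, say $\lvert f\rvert\leq M$ there, while $\lvert f\rvert<1$ off that set; thus $\lvert f\rvert\leq\max\{M,1\}$ on all of $X$ and $f\in C^*(X)$. Consequently $C^*(X)+C^\psi_\infty(X)=C^*(X)$, so clause (3) of the specialised Theorem \ref{12}, namely that $C^{\mathcal{B}}_\infty(X)$ is an essential ideal of $C^*(X)+C^{\mathcal{B}}_\infty(X)$, is exactly the assertion that $C^\psi_\infty(X)$ is an essential ideal of $C^*(X)$. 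Chaining the three equivalences of Theorem \ref{12} through these identifications yields the desired equivalence of (1), (2), (3).

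The main (indeed the only non-mechanical) step is this last observation that $C^\psi_\infty(X)\subseteq C^*(X)$, which is what lets one collapse $C^*(X)+C^\psi_\infty(X)$ to $C^*(X)$; all the remaining work is a direct substitution into results already established earlier in the paper, so I would not expect any genuine obstacle.
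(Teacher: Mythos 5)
Your proposal is correct and is essentially the paper's own argument, which simply cites Theorem \ref{8}, the first two lines of Remark \ref{10}, and Theorem \ref{12}. The one detail you spell out that the paper leaves implicit is the verification that $C^\psi_\infty(X)\subseteq C^*(X)$ (so that $C^*(X)+C^\psi_\infty(X)=C^*(X)$), and your argument for it via boundedness of $\{x:\lvert f(x)\rvert\geq 1\}$ is correct.
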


The authors in \cite{KR}, have observed that for a large class of spaces $X$, the socle $S$ of $C(X)$ turns out to be the same as the ring $C_K(X)$ and have asked for a characterisation of space $X$ with such property. Motivated by this question, the authors in \cite{A} offered a simple topological characterization of such spaces and a special subclass of those spaces in the following manner.

\begin{theorem}\label{13}(Theorem 4.5\cite{A})
\begin{enumerate}
\item $C_K(X)=S$ if and only if $X$ is pseudodiscrete meaning that each compact subset of $X$ has finite interior.
\item $C_\infty(X)=S$ if and only if $X$ is pseudodiscrete and contains at most a finite numbers of isolated points.
\end{enumerate}
\end{theorem}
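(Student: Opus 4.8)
The plan is to reduce everything to the standard description of the socle: $S=\{f\in C(X): X\setminus Z(f)\text{ is finite}\}$, a set which is automatically composed of isolated points, since a cozero set is open and a finite open subset of a $T_1$-space consists of points isolated in $X$. (This is recorded in \cite{KR}; alternatively it follows at once from the fact that the minimal ideals of $C(X)$ are precisely the principal ideals $e_pC(X)=\mathbb{R}e_p$ generated by the characteristic function $e_p$ of an isolated point $p$, together with the fact that $S$ is the sum of all minimal ideals of the reduced ring $C(X)$.) Granting this, I would first record the chain $S\subseteq C_K(X)\subseteq C_\infty(X)$, valid for every $X$: a function with finite cozero set has finite, hence compact, support; and a function with compact support vanishes at infinity. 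Thus in both parts one inclusion is free, and the work is the reverse inclusion and its failure when the topological hypothesis is dropped.

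For part (1), the implication $(\Leftarrow)$ is immediate: if $f\in C_K(X)$ then $X\setminus Z(f)$ is an open subset of the compact set $K:=\cl_X(X\setminus Z(f))$, hence $X\setminus Z(f)\subseteq\operatorname{int}_X K$, and pseudodiscreteness forces $\operatorname{int}_X K$, and so $X\setminus Z(f)$, to be finite; therefore $f\in S$. For $(\Rightarrow)$ I would argue contrapositively. Assume $X$ is not pseudodiscrete and fix a compact $K$ whose interior $U$ is infinite; I produce $f\in C_K(X)\setminus S$. If $U$ contains a non-isolated point $p$, complete regularity gives $g\in C^*(X)$ with $g(p)=1$ and $g\equiv 0$ on $X\setminus U$; then $X\setminus Z(g)$ is an open neighbourhood of $p$, hence infinite (in a $T_1$-space every neighbourhood of a non-isolated point is infinite), and it lies in $U\subseteq K$, so $g\in C_K(X)\setminus S$. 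If, instead, every point of $U$ is isolated, pick a countably infinite $\{x_n\}\subseteq U$ and set $f(x_n)=1/n$, $f\equiv 0$ elsewhere; $f$ is continuous because at any $x\notin\{x_n\}$ and any $\varepsilon>0$ the set $\{x_n:1/n\ge\varepsilon\}$ is finite, hence closed, and its complement is a neighbourhood of $x$ on which $|f|<\varepsilon$. Since $\cl_X\{x_n\}\subseteq K$ is compact, $f\in C_K(X)$, while $X\setminus Z(f)=\{x_n\}$ is infinite, so $f\notin S$. Either way $C_K(X)\neq S$.

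For part (2), $(\Rightarrow)$ follows from part (1): if $C_\infty(X)=S$, the sandwich $S\subseteq C_K(X)\subseteq C_\infty(X)$ gives $C_K(X)=S$, so $X$ is pseudodiscrete; and if $X$ had infinitely many isolated points $\{x_n\}$, the same function $f(x_n)=1/n$, $f\equiv 0$ elsewhere, would lie in $C_\infty(X)$ — each $\{\,|f|\ge\varepsilon\,\}$ being finite and hence compact — but not in $S$. For $(\Leftarrow)$, take $f\in C_\infty(X)$; for each $\varepsilon>0$ the open set $\{\,|f|>\varepsilon\,\}$ lies in the compact set $\{\,|f|\ge\varepsilon\,\}$, hence in its finite interior, so $\{\,|f|>\varepsilon\,\}$ is finite; therefore $X\setminus Z(f)=\bigcup_{n}\{\,|f|>1/n\,\}$ is a countable set of isolated points, which is finite because $X$ has only finitely many isolated points. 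Hence $f\in S$.

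The only genuinely delicate point is the $(\Rightarrow)$ direction of part (1): one must fabricate a witness in $C_K(X)\setminus S$ knowing merely that some compact set has infinite interior, and the natural construction bifurcates according to whether that interior contains a non-isolated point or is purely discrete. The case split, together with checking that the ``$1/n$ along a discrete sequence'' function is continuous at its limit points, is where the small amount of care is required; the rest is the socle formula and the two trivial inclusions $S\subseteq C_K(X)\subseteq C_\infty(X)$.
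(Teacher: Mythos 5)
Your argument is correct and complete, but note that the paper itself offers no proof of this statement: it is imported verbatim as Theorem 4.5 of Azarpanah's paper \cite{A}, so there is nothing internal to compare against. Your reduction to the socle formula $S=\{f\in C(X):X\setminus Z(f)\text{ is finite}\}$ (Proposition 3.3 of \cite{KR}, which the paper also invokes later for Theorem \ref{14}), the free chain $S\subseteq C_K(X)\subseteq C_\infty(X)$, and the case split in the converse of (1) according to whether the infinite interior contains a non-isolated point or is purely discrete, all check out; in particular the continuity of the ``$1/n$ along a discrete sequence'' witness and the observation that a finite open set in a $T_1$-space consists of isolated points are exactly the points that need care, and you have handled them. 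This is essentially the standard argument one finds in \cite{A}, so nothing further is needed.
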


Being prompted by this result, we have established the following fact which places this result in a more general setting. Given $\mathcal{P}\in\Omega(X)$, we call $X$, $\mathcal{P}$-pseudodiscrete if each member of $\mathcal{P}$ has finite interior.

\begin{theorem}\label{14}
Let $\mathcal{P}\in\Omega(X)$. Suppose that either $X$ has no isolated point or each singleton subset of $X$ is a member of $\mathcal{P}$. Then 
\begin{enumerate}
\item $C_\mathcal{P}(X)=S$ if and only if $X$ is $\mathcal{P}$ pseudodiscrete.
\item $C^\mathcal{P}_\infty(X)=S$ if and only if $X$ is $\mathcal{P}$ pseudodiscrete and contains at most a finite numbers of isolated points.
\end{enumerate}
\end{theorem}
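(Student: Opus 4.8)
The plan is to rely on the standard description of the socle $S$ of $C(X)$: it consists precisely of those $f\in C(X)$ for which $X\setminus Z(f)$ is finite, and any such set is automatically a finite set of isolated points of $X$; equivalently $S=C_{\mathfrak{F}}(X)$, where $\mathfrak{F}\in\Omega(X)$ is the ideal of all finite subsets of $X$ (this is the content of \cite{KR} and is what underlies Theorem \ref{13}). The first step is the easy observation that the standing hypothesis already gives $S\subseteq C_{\mathcal{P}}(X)\subseteq C^{\mathcal{P}}_{\infty}(X)$. Indeed, if $X$ has no isolated point, then $X\setminus Z(f)$ being finite and open forces $f=0$, so $S=\{0\}$; and if $X$ has an isolated point, the hypothesis puts every singleton, hence (as $\mathcal{P}$ is closed under finite unions) every finite subset of $X$, into $\mathcal{P}$, so any $f$ with $X\setminus Z(f)$ finite satisfies $\cl_X(X\setminus Z(f))=X\setminus Z(f)\in\mathcal{P}$, i.e. $f\in C_{\mathcal{P}}(X)$. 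Thus in both parts of the theorem only the reverse inclusions need to be settled.

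For (1), in the direction $\mathcal{P}$-pseudodiscrete $\Rightarrow C_{\mathcal{P}}(X)\subseteq S$, I would take $f\in C_{\mathcal{P}}(X)$, note that $A:=\cl_X(X\setminus Z(f))\in\mathcal{P}$ has finite interior, and use that $X\setminus Z(f)$ is an open subset of $A$, hence is contained in $\operatorname{int}_X A$ and is finite. For the converse I would prove the contrapositive: if some $A\in\mathcal{P}$ has infinite interior $U:=\operatorname{int}_X A$, pick distinct points $x_1,x_2,\dots\in U$ and, by complete regularity, functions $g_n\in C^*(X)$ with $0\le g_n\le 1$, $g_n(x_n)=1$ and $g_n\equiv 0$ off $U$; then $f:=\sum_n 2^{-n}g_n$ converges uniformly to an element of $C(X)$ whose zero set is $\bigcap_n Z(g_n)$ (here the nonnegativity of the $g_n$ is used), so that $X\setminus Z(f)=\bigcup_n(X\setminus Z(g_n))$ is an infinite subset of $U$, giving $\cl_X(X\setminus Z(f))\subseteq\cl_X U\subseteq A\in\mathcal{P}$; hence $f\in C_{\mathcal{P}}(X)\setminus S$ and $C_{\mathcal{P}}(X)\neq S$.

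For (2), in the $\Leftarrow$ direction I would take $f\in C^{\mathcal{P}}_{\infty}(X)$ and, for each $n$, observe that $\{x:\lvert f(x)\rvert\ge 1/n\}\in\mathcal{P}$ has finite interior while $U_n:=\{x:\lvert f(x)\rvert>1/n\}$ is an open subset of it, hence a finite open set and therefore a finite set of isolated points of $X$; since $X\setminus Z(f)=\bigcup_n U_n$, the assumption that $X$ has only finitely many isolated points makes $X\setminus Z(f)$ finite, i.e. $f\in S$. Combined with the first paragraph this yields $C^{\mathcal{P}}_{\infty}(X)=S$. For the $\Rightarrow$ direction, $C_{\mathcal{P}}(X)\subseteq C^{\mathcal{P}}_{\infty}(X)=S$ forces $\mathcal{P}$-pseudodiscreteness by part (1); and if $X$ also had infinitely many isolated points $a_1,a_2,\dots$, then the function $f$ with $f(a_n)=1/n$ and $f\equiv 0$ elsewhere is continuous (for each $\epsilon>0$ the set $\{x:\lvert f(x)\rvert\ge\epsilon\}=\{a_n:1/n\ge\epsilon\}$ is finite, hence closed, and $\lvert f\rvert<\epsilon$ off it), lies in $C^{\mathcal{P}}_{\infty}(X)$ (each of those finite sets belongs to $\mathcal{P}$, since the presence of an isolated point forces all singletons into $\mathcal{P}$), yet has $X\setminus Z(f)$ infinite, contradicting $C^{\mathcal{P}}_{\infty}(X)=S$. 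Hence $X$ has only finitely many isolated points.

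The steps I would treat as routine are the identity $S=C_{\mathfrak{F}}(X)$ and the remark that a finite open subset of a Tychonoff space consists of isolated points. I expect the main obstacle to be the construction, in the two converse directions, of a continuous function with infinite support confined to a prescribed open set: the delicate point is checking that the uniformly convergent series $\sum_n 2^{-n}g_n$ has zero set exactly $\bigcap_n Z(g_n)$ — which is precisely why the $g_n$ must be chosen nonnegative — so that its support is simultaneously infinite and contained in $U$.
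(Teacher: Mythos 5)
Your proposal is correct and follows essentially the same route as the paper: both start from the \cite{KR} description of the socle as the functions with finite cozero set, deduce $S\subseteq C_{\mathcal{P}}(X)\subseteq C^{\mathcal{P}}_{\infty}(X)$ from the standing hypothesis, and then handle the reverse inclusions and the converse directions by the standard Azarpanah-type constructions (the uniformly convergent series $\sum_n 2^{-n}g_n$ supported in an infinite-interior member of $\mathcal{P}$, and the $1/n$-valued function on infinitely many isolated points). The paper merely sketches these last steps by referring to the arguments of Theorem \ref{13}; you have written them out in full, and correctly.
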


\begin{proof}
\begin{enumerate}
\item Since the socle $S$ of $C(X)$ contains precisely those functions in $C(X)$, which vanish everywhere except on a finite number of points of $X$, as is established in \cite{KR}, Proposition 3.3, it follows on using the hypothesis at the beginning of the statement of this theorem that $S\subseteq C_\mathcal{P}(X)$. The reverse containment $C_\mathcal{P}\subseteq S$ can be proved by using the $\mathcal{P}$-pseudodiscreteness of $X$ and by adapting carefully the arguments in the proof of Theorem 4.7 (1).
\item This can be proved by closely following the arguments of proof of Theorem \ref{13} (2) and making certain obvious modifications.
\end{enumerate}
\end{proof}

We would like to record a special case of Theorem 4.8, on using $\mathcal{P}\equiv \mathcal{B}$. To do so, we need the following fact.

\begin{theorem}\label{15}
The pair of statements written below are equivalent:
\begin{enumerate}
\item Every closed pseudocompact subset of $X$ has finite interior.
\item Every closed bounded subset of $X$ has finite interior.
\end{enumerate}
\end{theorem}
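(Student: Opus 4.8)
The plan is to establish the two implications separately; $(2)\Rightarrow(1)$ is immediate and $(1)\Rightarrow(2)$ carries the whole content. For $(2)\Rightarrow(1)$: recall that every pseudocompact subspace of $X$ is bounded, so a closed pseudocompact subset of $X$ is in particular a closed bounded subset of $X$; statement $(2)$ then applies to it and forces its interior to be finite.

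For $(1)\Rightarrow(2)$ I would argue contrapositively. Suppose there is a closed bounded set $A\subseteq X$ with $B:=\mathrm{int}_X A$ infinite; the goal is to produce a closed pseudocompact subset of $X$ with infinite interior, contradicting $(1)$. The first step is to show that every point $p\in B$ is isolated in $X$. Using complete regularity, pick $g\in C^*(X)$ with $g(p)=1$ and $g(X\setminus B)=0$, and put $h=\max(g-\tfrac12,0)$. Then $V_p:=X\setminus Z(h)=\{x\in X:g(x)>\tfrac12\}$ is open, $p\in V_p$, and $\cl_X V_p\subseteq\{x\in X:g(x)\geq\tfrac12\}\subseteq B\subseteq A$. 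Hence $\cl_X V_p$ is a support contained in the bounded set $A$, so it is bounded, so by Theorem \ref{7} it is pseudocompact; by hypothesis $(1)$ its interior is finite, and since the open set $V_p$ sits inside $\mathrm{int}_X\cl_X V_p$, the set $V_p$ is finite. A finite open subset of the Hausdorff (indeed $T_1$) space $X$ is discrete, so $\{p\}=V_p\setminus(V_p\setminus\{p\})$ is open, i.e. $p$ is isolated.

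Thus $B$ is an infinite set of isolated points of $X$. The second step chooses a countably infinite subset $\{x_n:n\in\mathbb{N}\}$ of $B$ and observes that it is the cozero set of a continuous function: define $f$ by $f(x_n)=1/n$ and $f\equiv 0$ on $X\setminus\{x_n:n\in\mathbb{N}\}$. Continuity at each $x_n$ is clear since $x_n$ is isolated; at a point $y\notin\{x_n:n\in\mathbb{N}\}$ and a given $\varepsilon>0$, choosing $N$ with $1/N<\varepsilon$, the set $X\setminus\{x_1,\dots,x_{N-1}\}$ is co-finite, hence open, contains $y$, and on it $|f|\leq 1/N<\varepsilon$. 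Therefore $S:=\cl_X\big(X\setminus Z(f)\big)=\cl_X\{x_n:n\in\mathbb{N}\}$ is a support contained in $A$, hence bounded, hence pseudocompact by Theorem \ref{7}; and since each $x_n$ is isolated we have $\{x_n:n\in\mathbb{N}\}\subseteq\mathrm{int}_X S$, so $S$ is a closed pseudocompact subset of $X$ with infinite interior. This contradicts $(1)$ and completes the argument.

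The part I expect to be the main obstacle is not the elementary interior/boundedness bookkeeping but the step that actually manufactures a \emph{pseudocompact} set with infinite interior out of the mere existence of infinitely many isolated points in $B$; the crucial observation there is that any countable set of isolated points is a cozero set, so that its closure is a support and Mandelkar's Theorem \ref{7} upgrades its automatic boundedness to pseudocompactness.
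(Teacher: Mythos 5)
Your proof is correct, and it reaches the conclusion by a genuinely different (and somewhat longer) route than the paper. The paper also argues $(1)\Rightarrow(2)$ by contradiction and also hinges on Mandelkar's Theorem \ref{7} to upgrade a bounded support to a pseudocompact set, but it skips your entire first step: it simply picks a countably infinite set $B=\{b_1,b_2,\dots\}\subseteq \mathrm{int}_XA$, uses complete regularity to choose, for each $i$, an $f_i\in C(X)$ with $b_i\in X\setminus Z(f_i)\subseteq \mathrm{int}_XA$, and then invokes the standard fact that a countable union of cozero sets is a cozero set to get a single $f$ with $B\subseteq X\setminus Z(f)\subseteq \mathrm{int}_XA$; the support $\cl_X(X\setminus Z(f))$ is then bounded, hence pseudocompact, and its interior contains the infinite open set $X\setminus Z(f)$. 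Your detour --- using hypothesis $(1)$ together with Mandelkar's theorem once per point to show that every point of $\mathrm{int}_XA$ is isolated, and only then hand-building the function $f(x_n)=1/n$ on a countable set of isolated points --- is valid and even yields a small bonus (under $(1)$, the interior of any closed bounded set consists entirely of isolated points), but it costs you an extra layer of argument and an explicit continuity check that the paper's one-shot construction of the cozero set makes unnecessary.
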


\begin{proof}
$(2)\Rightarrow (1):$ This is trivial. Assume that $(1)$ holds. Take a closed bounded set $A$ in $X$. If possible let $int_XA$ be infinite. We choose a countable infinite subset $B=\{b_1,b_2,...,b_n,....\}$ of $int_XA$. For each $i\in \mathbb{N}$, there exists an $f_i\in C(X)$ such that $b_i\in X\setminus Z(f_i)\subseteq int_XA$. This implies that $B\subseteq \cup_{i=1}^{\infty}(X\setminus Z(f_i))\subseteq int_XA$, hence $B\subseteq X\setminus \cap_{i=1}^\infty Z(f_i)=X\setminus Z(f)\subseteq int_XA\subseteq A$ for some $f\in C(X)$. This implies that $B\subseteq X\setminus Z(f)\subseteq \cl_X (X\setminus Z(f))\subseteq A$. Now since $A$ is bounded so also is $\cl_X(X\setminus Z(f))$, which in view of Theorem \ref{7}, turns out to be a closed pseudocompact space with infinite interior. A contradiction is arrived at. 
\end{proof}

\begin{theorem}
For a space $X$,
\begin{enumerate}
\item $C_\psi(X)=S$ if and only if every closed pseudocompact subset of $S$ has finite interior.
\item $C^\psi_\infty(X)=S$ if and only if every closed pseudocompact subset of $X$ has finite interior and $X$ contains not more than finitely many isolated points.
\end{enumerate}
\end{theorem}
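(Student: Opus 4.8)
The plan is to obtain this theorem as the special case $\mathcal{P}\equiv\mathcal{B}$ of Theorem \ref{14}, where $\mathcal{B}$ denotes the ideal of all closed bounded subsets of $X$, after checking that the standing hypothesis of that theorem is automatically satisfied and then translating its conclusion by means of Theorem \ref{15}.

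First I would record the elementary observation that every finite subset of $X$ is bounded, since each $f\in C(X)$ is trivially bounded on a finite set; in particular every singleton of $X$ belongs to $\mathcal{B}$. Consequently the hypothesis of Theorem \ref{14} --- ``either $X$ has no isolated point or each singleton subset of $X$ is a member of $\mathcal{P}$'' --- holds with $\mathcal{P}\equiv\mathcal{B}$ for an arbitrary space $X$, so Theorem \ref{14} is available without any restriction on $X$.

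Next I would invoke the first two lines of Remark \ref{10}, which (via Mandelkar's Theorem \ref{7}) give $C_\psi(X)=C_\mathcal{B}(X)$ and $C^\psi_\infty(X)=C^\mathcal{B}_\infty(X)$. Substituting $\mathcal{P}\equiv\mathcal{B}$ into Theorem \ref{14}(1) then yields: $C_\psi(X)=S$ if and only if $X$ is $\mathcal{B}$-pseudodiscrete, i.e.\ every member of $\mathcal{B}$ --- equivalently every closed bounded subset of $X$ --- has finite interior. Applying Theorem \ref{15}, this is the same as saying that every closed pseudocompact subset of $X$ has finite interior, which is part (1). (The printed phrase ``closed pseudocompact subset of $S$'' should evidently read ``of $X$''.) Part (2) follows in exactly the same fashion from Theorem \ref{14}(2): it becomes $C^\psi_\infty(X)=S$ if and only if $X$ is $\mathcal{B}$-pseudodiscrete and has at most finitely many isolated points, and one again replaces ``$\mathcal{B}$-pseudodiscrete'' by ``every closed pseudocompact subset of $X$ has finite interior'' using Theorem \ref{15}.

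I do not expect any genuine obstacle here; the argument is essentially an assembly of earlier results. The only step that requires a moment's attention is the verification that the hypothesis of Theorem \ref{14} is met, which as noted above is immediate because $\mathcal{B}$ contains all finite sets --- so, unlike a general $\mathcal{P}\in\Omega(X)$, no supplementary assumption on the isolated points of $X$ is needed for the first equivalence.
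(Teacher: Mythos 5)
Your proposal is correct and is exactly the paper's argument: the authors prove this theorem by the single line ``Follows from Theorem \ref{14} and Theorem \ref{15},'' and you have simply filled in the routine details (the hypothesis check that singletons lie in $\mathcal{B}$, the use of Remark \ref{10}, and the translation via Theorem \ref{15}). Nothing further is needed.
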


\begin{proof}
Follows from Theorem \ref{14} and Theorem \ref{15}.
\end{proof}

\begin{example}
	The conclusions of Theorem \ref{14} may not hold good if the hypothesis that each singleton subset of $X$ is a member of $\mathcal{P}$(in case $X$ contains at least one isolated point) is dropped.
	Indeed we take any infinite Tychonoff space $X$, where there is just one isolated point say $p$. Let $\mathcal{P}$ be any member of $\Omega(X)$ with the following two conditions: 

\begin{enumerate}
\item If $A\in \mathcal{P}$ then $p\notin A$.
\item Interior of each member of $\mathcal{P}$ is finite.	
\end{enumerate}	

To be specific we can choose $\mathcal{P}$ to be the family of all finite subsets of $X$ which miss the point $p$ or the family of all closed nowhere dense subsets of $X$, missing the point $p$. The function $f\in C(X)$ given by $f(p)=1$ and $f(q)=0$ for all $p\neq q$ in $X$, belongs to the socle $S$ of $C(X)$ in view of Proposition 3.3 in \cite{KR}. But we see that $\cl_X(X\setminus Z(f))=\{p\}$, which does not belong to $\mathcal{P}$, which means that $f\notin C_\mathcal{P}(X)$. Altogether $S\neq C_\mathcal{P}(X)$. It is also easy to check that $f\notin C^\mathcal{P}_\infty(X)$ because $\{x\in X:\lvert f(x)\rvert \geq \frac{1}{2}\}=\{p\}$ does not belong to $\mathcal{P}$. Then $S\neq C^\mathcal{P}_\infty(X)$.	
\end{example}

\section{Ideals $C_\mathcal{P}(X)$ are the same as $z_\circ$-ideals in $C(X)$}
For an element $a$ of a commutative ring $R$ with identity, let $P_a$ stand for the intersection of all minimal prime ideals of $R$ which contain $a$. An ideal $I$ of $R$ is called a $z_\circ$-ideal if for each member $a$ of $I, P_a\subseteq I$.

The following fact is proved in \cite{AKA}.

\begin{theorem}\label{16}
For $f\in C(X), P_f=\{ g\in C(X):int_XZ(f)\subseteq int_XZ(g)\}$.
\end{theorem}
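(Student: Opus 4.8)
The plan is to derive this identity from a purely ring-theoretic description of $P_a$ valid in every reduced commutative ring with identity, and then to translate that description into the language of zero sets using the complete regularity of $X$. The algebraic lemma I would isolate first is: in a reduced commutative ring $R$ with $1$, for each $a\in R$,
\[
 P_a=\{\,b\in R:\operatorname{Ann}(a)\subseteq\operatorname{Ann}(b)\,\},\qquad \operatorname{Ann}(x):=\{y\in R:xy=0\}.
\]
Here I would invoke the classical fact that a prime ideal $P$ of a reduced ring is minimal precisely when for every $a\in P$ there is some $c\notin P$ with $ac=0$. One inclusion is then immediate: if $\operatorname{Ann}(a)\subseteq\operatorname{Ann}(b)$ and $P$ is a minimal prime with $a\in P$, pick $c\notin P$ with $ac=0$; then $c\in\operatorname{Ann}(a)\subseteq\operatorname{Ann}(b)$, so $bc=0\in P$, and primeness of $P$ forces $b\in P$. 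Hence $b$ lies in every minimal prime containing $a$, i.e.\ $b\in P_a$.

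For the reverse inclusion I would argue by contraposition. If $\operatorname{Ann}(a)\not\subseteq\operatorname{Ann}(b)$, choose $c$ with $ac=0$ and $bc\neq 0$. Since $R$ is reduced, $bc$ is not nilpotent, so $0$ does not belong to the multiplicative set $S=\{(bc)^{n}:n\geq 0\}$; the localisation $R[S^{-1}]$ is therefore a nonzero ring and I would pick a minimal prime $\mathfrak{p}$ of it. Its contraction $P$ to $R$ is a minimal prime of $R$ --- every prime of $R$ contained in $P$ is again disjoint from $S$, so minimality of $P$ among the $S$-disjoint primes is genuine minimality in $\operatorname{Spec}R$ --- and $P$ satisfies $bc\notin P$, hence $b,c\notin P$, whereas $ac=0\in P$ together with $c\notin P$ forces $a\in P$. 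Thus $P$ is a minimal prime containing $a$ but not $b$, so $b\notin P_a$. I expect this step to be the main obstacle: the naive approach of descending into a maximal ideal containing $a$ and taking a minimal prime below it does not work, because such a minimal prime need not contain $a$; localising at $S$ is exactly the device that keeps $a$ inside the minimal prime under construction while keeping $b$ outside it.

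It remains to specialise to $R=C(X)$, which is reduced, and to compute annihilators: for $f\in C(X)$ one has $\operatorname{Ann}(f)=\{g\in C(X):(X\setminus Z(g))\cap(X\setminus Z(f))=\emptyset\}=\{g:X\setminus Z(g)\subseteq int_XZ(f)\}$, the last equality because $X\setminus Z(g)$ is open. Consequently $\operatorname{Ann}(f)\subseteq\operatorname{Ann}(g)$ if and only if $int_XZ(f)\subseteq int_XZ(g)$: the implication $(\Leftarrow)$ is trivial, and for $(\Rightarrow)$, given $x\in int_XZ(f)$ the complete regularity of $X$ produces $h\in C(X)$ with $h(x)\neq 0$ and $X\setminus Z(h)\subseteq int_XZ(f)$, so that $h\in\operatorname{Ann}(f)\subseteq\operatorname{Ann}(g)$ yields $x\in X\setminus Z(h)\subseteq int_XZ(g)$. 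Feeding this equivalence into the lemma gives $P_f=\{g\in C(X):int_XZ(f)\subseteq int_XZ(g)\}$, which is the assertion. Alternatively, the whole argument can be carried out inside $C(X)$: the easy inclusion reads ``$int_XZ(f)\subseteq int_XZ(g)$ forces $g$ into every minimal prime containing $f$'', and when this inclusion fails the required minimal prime can be manufactured either by the same localisation applied to $gh$ with $h$ chosen so that $X\setminus Z(h)\subseteq int_XZ(f)$, or by using that $\{u\in C(X):p\in int_XZ(u)\}$ is contained in every minimal prime lying inside the maximal ideal of functions vanishing at a chosen point $p$.
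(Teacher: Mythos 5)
The paper does not prove this statement at all: it is quoted without proof from the reference \cite{AKA}, so there is no internal argument to compare against. Your proof is correct and is essentially the standard one from that literature: the reduction to the annihilator description $P_a=\{b:\operatorname{Ann}(a)\subseteq\operatorname{Ann}(b)\}$ in a reduced ring (via the characterisation of minimal primes and the localisation at the powers of $bc$, which you rightly flag as the delicate step) followed by the complete-regularity computation identifying $\operatorname{Ann}(f)\subseteq\operatorname{Ann}(g)$ with $int_XZ(f)\subseteq int_XZ(g)$ is exactly how the cited sources establish it; all steps check out.
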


We now prove the main result of this section.

\begin{theorem}\label{17}
A proper ideal $I$ of $C(X)$ is a $z^\circ$-ideal if and only if there exists $\mathcal{P}\in \Omega (X)$ such that $I= C_\mathcal{P}(X)$.
\end{theorem}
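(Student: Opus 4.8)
The plan is to prove both implications by passing through the characterization of $P_f$ given in Theorem \ref{16}. For the ``if'' direction, suppose $I = C_\mathcal{P}(X)$ for some $\mathcal{P}\in\Omega(X)$. I would take $f\in I$ and $g\in P_f$, so that $\mathrm{int}_X Z(f)\subseteq \mathrm{int}_X Z(g)$ by Theorem \ref{16}, and show $g\in C_\mathcal{P}(X)$. Taking complements, $X\setminus\mathrm{int}_X Z(g)\subseteq X\setminus\mathrm{int}_X Z(f)$; now $X\setminus\mathrm{int}_X Z(h) = \mathrm{cl}_X(X\setminus Z(h))$ for any $h\in C(X)$, so this says $\mathrm{cl}_X(X\setminus Z(g))\subseteq \mathrm{cl}_X(X\setminus Z(f))$. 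Since $f\in C_\mathcal{P}(X)$ means $\mathrm{cl}_X(X\setminus Z(f))\in\mathcal{P}$, and $\mathcal{P}$ is closed under passing to closed subsets, we get $\mathrm{cl}_X(X\setminus Z(g))\in\mathcal{P}$, i.e. $g\in C_\mathcal{P}(X) = I$. Hence $I$ is a $z^\circ$-ideal. (One should also note $C_\mathcal{P}(X)$ is a genuine ideal of $C(X)$, which is already recorded in the introduction.)

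For the ``only if'' direction, let $I$ be a proper $z^\circ$-ideal of $C(X)$. The natural candidate is $\mathcal{P} := \{A\subseteq X : A \text{ closed in } X,\ A\subseteq \mathrm{cl}_X(X\setminus Z(f)) \text{ for some } f\in I\}$. First I would verify $\mathcal{P}\in\Omega(X)$: closure under closed subsets is immediate from the definition, and closure under finite unions follows because $\mathrm{cl}_X(X\setminus Z(f))\cup\mathrm{cl}_X(X\setminus Z(g)) = \mathrm{cl}_X(X\setminus Z(fg))$ (as $Z(fg)=Z(f)\cap Z(g)$) together with the fact that $fg\in I$ whenever $f\in I$. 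Then I must show $C_\mathcal{P}(X) = I$. The inclusion $I\subseteq C_\mathcal{P}(X)$ is trivial: for $f\in I$, $\mathrm{cl}_X(X\setminus Z(f))$ is one of the generating sets of $\mathcal{P}$, so $f\in C_\mathcal{P}(X)$.

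The reverse inclusion $C_\mathcal{P}(X)\subseteq I$ is where the $z^\circ$ hypothesis does the real work, and I expect this to be the main obstacle. Suppose $g\in C_\mathcal{P}(X)$, so $\mathrm{cl}_X(X\setminus Z(g))\in\mathcal{P}$; by definition of $\mathcal{P}$ there is $f\in I$ with $\mathrm{cl}_X(X\setminus Z(g))\subseteq \mathrm{cl}_X(X\setminus Z(f))$. Rewriting via complements, $\mathrm{int}_X Z(f)\subseteq \mathrm{int}_X Z(g)$, which by Theorem \ref{16} says exactly $g\in P_f$. Since $I$ is a $z^\circ$-ideal and $f\in I$, we have $P_f\subseteq I$, hence $g\in I$. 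This closes the loop and shows $C_\mathcal{P}(X) = I$. The one subtlety to handle carefully is that Theorem \ref{16} is stated for commutative rings with identity, which $C(X)$ is, so it applies directly; and the identity $X\setminus \mathrm{int}_X Z(h) = \mathrm{cl}_X(X\setminus Z(h))$ — a purely point-set fact — should be stated explicitly since it is the bridge between the $z^\circ$ language and the support language. No properness issue arises in either direction beyond noting that $I$ proper forces $\mathcal{P}$ to contain no set with nonempty... actually properness of $I$ is automatically inherited since $1\notin I$ gives $X = \mathrm{cl}_X(X\setminus Z(1))\notin\mathcal{P}$, so $C_\mathcal{P}(X)$ is proper as well, consistent with $C_\mathcal{P}(X)=I$.
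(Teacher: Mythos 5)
Your proof is correct and follows essentially the same route as the paper: both directions hinge on Theorem \ref{16} together with the complementation identity $X\setminus \mathrm{int}_X Z(h)=\cl_X(X\setminus Z(h))$, and your candidate $\mathcal{P}$ is exactly the paper's $\mathcal{P}[I]$ (with your union-closure check via $Z(fg)=Z(f)\cap Z(g)$ filling in what the paper leaves as ``easily verified''). The only blemish is the final aside: $1\notin I$ alone does not force $X\notin\mathcal{P}$ (that requires $\mathrm{int}_X Z(f)\neq\emptyset$ for every $f\in I$, which does hold for proper $z^\circ$-ideals), but the remark is superfluous since properness of $C_\mathcal{P}(X)$ already follows from the established equality $C_\mathcal{P}(X)=I$.
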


\begin{proof}
Let $I$ be a $z^\circ$-ideal of $C(X)$. Suppose $\mathcal{P}[I]$ is the smallest member of $\Omega (X)$ containing the family of sets $\{\cl_X(X\setminus Z(f)):f\in I\}$. Since the family is already closed under finite intersection, as can be easily verified, it follows that $\mathcal{P}[I]=\{Y\subseteq X: Y \text{ is closed in } X \text{ and } Y\subseteq \cl_X(X\setminus Z(f))\text{ for some } f\in I\}$. We shall show that $I=C_{\mathcal{P}[I]}(X)$. It is plain that $I\subseteq C_{\mathcal{P}[I]}(X)$. To prove the other containment, choose $f$ from $C_{\mathcal{P}[I]}(X)$. Then there exists a $g\in I$ such that $\cl_X(X\setminus Z(f))\subseteq \cl_X(X\setminus Z(g))$ which clearly implies that $int_XZ(g)\subseteq int_XZ(f)$. It follows from the Theorem \ref{16} that $f\in P_g$. Also the fact that $I$ is a $z^\circ$-ideal of $C(X)$ together with $g\in I$ imply that $P_g\subseteq I$. Hence $f\in I$. Thus $C_{\mathcal{P}[I]}(X)\subseteq I$.

To prove the other part of this theorem, let $\mathcal{P}\in\Omega (X)$. We shall show that $C_\mathcal{P}(X)$ is a $z^\circ$-ideal of $C(X)$. Choose $f\in C_\mathcal{P}(X)$ and $g\in P_f$. Then from Theorem \ref{16}, we can write $int_X Z(f)\subseteq int_X Z(g)$. This implies that $\cl_X (X\setminus Z(g))\subseteq \cl_X (X\setminus Z(f))$. But $f\in C_\mathcal{P}(X)$ means that $\cl_X(X\setminus Z(f))\in \mathcal{P}$. Since $\mathcal{P}$ is an ideal of closed sets in $X$, it follows that $\cl_X(X\setminus Z(g))\in\mathcal{P}$ and hence $g\in C_\mathcal{P}(X)$. Thus we get $P_f\subseteq C_\mathcal{P}(X)$ for an arbitrary $f$ in $C_\mathcal{P}(X)$. This settles that $C_\mathcal{P}(X)$ is a $z^\circ$-ideal of $C(X)$.
\end{proof}

\begin{corollary}
Let an ideal $I$ in $C(X)$ contain divisors of zero only. Then $I$ is contained in a proper $z^\circ$-ideal of $C(X)$.
\end{corollary}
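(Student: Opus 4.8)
The plan is to produce the required ideal explicitly in the form $C_\mathcal{P}(X)$, reusing the constructions already employed in the proof of Theorem \ref{17}. The first step is to recast the hypothesis topologically: an element $f\in C(X)$ is a divisor of zero exactly when $\mathrm{int}_XZ(f)\neq\emptyset$, equivalently when $\cl_X(X\setminus Z(f))\neq X$. Indeed, if $g\neq 0$ with $fg=0$, then the nonempty cozero set $X\setminus Z(g)$ is contained in $Z(f)$, so $\mathrm{int}_XZ(f)\neq\emptyset$; conversely, by complete regularity one can build a nonzero function supported inside the nonempty open set $\mathrm{int}_XZ(f)$, and it annihilates $f$. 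Thus the assumption on $I$ says precisely that $\cl_X(X\setminus Z(f))\neq X$ for every $f\in I$.

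Next, exactly as in the proof of Theorem \ref{17}, I would form $\mathcal{P}[I]$, the smallest member of $\Omega(X)$ containing the family $\{\cl_X(X\setminus Z(f)):f\in I\}$. This generating family is closed under finite unions, since $\cl_X(X\setminus Z(f))\cup\cl_X(X\setminus Z(g))=\cl_X(X\setminus Z(f^2+g^2))$ with $f^2+g^2\in I$, so one obtains the explicit description $\mathcal{P}[I]=\{Y\subseteq X: Y\text{ closed in }X \text{ and }Y\subseteq\cl_X(X\setminus Z(f))\text{ for some }f\in I\}$, just as in Theorem \ref{17}. By the argument in the second half of the proof of Theorem \ref{17}, $C_\mathcal{P}(X)$ is a $z^\circ$-ideal of $C(X)$ for every $\mathcal{P}\in\Omega(X)$; applying this with $\mathcal{P}=\mathcal{P}[I]$ shows $C_{\mathcal{P}[I]}(X)$ is a $z^\circ$-ideal. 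Moreover $I\subseteq C_{\mathcal{P}[I]}(X)$ is immediate, because $\cl_X(X\setminus Z(f))\in\mathcal{P}[I]$ for each $f\in I$ by construction.

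It remains to verify that $C_{\mathcal{P}[I]}(X)$ is a \emph{proper} ideal, and this is where the zero-divisor hypothesis is actually consumed. In general $C_\mathcal{P}(X)=C(X)$ if and only if $1\in C_\mathcal{P}(X)$, i.e. $X=\cl_X(X\setminus Z(1))\in\mathcal{P}$, so it suffices to show $X\notin\mathcal{P}[I]$. From the description above, $X\in\mathcal{P}[I]$ would force $X\subseteq\cl_X(X\setminus Z(f))$, that is $\cl_X(X\setminus Z(f))=X$, for some $f\in I$ — contradicting the fact that every member of $I$ is a divisor of zero. Hence $C_{\mathcal{P}[I]}(X)$ is a proper $z^\circ$-ideal of $C(X)$ containing $I$, which is the assertion. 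I do not expect a genuine obstacle here; the corollary is a quick consequence of Theorem \ref{17}, and the only points needing mild care are checking that the generating family is closed under finite unions (so the clean formula for $\mathcal{P}[I]$ applies) and translating "divisor of zero'' correctly into the condition $\cl_X(X\setminus Z(f))\neq X$.
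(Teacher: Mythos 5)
Your proposal is correct and follows essentially the same route as the paper: form $\mathcal{P}[I]$ as in the first part of Theorem \ref{17}, note $I\subseteq C_{\mathcal{P}[I]}(X)$ and that every $C_\mathcal{P}(X)$ is a $z^\circ$-ideal, then use the zero-divisor hypothesis to conclude $X\notin\mathcal{P}[I]$ and hence $1\notin C_{\mathcal{P}[I]}(X)$. Your write-up merely supplies more detail (the equivalence ``divisor of zero $\Leftrightarrow \mathrm{int}_XZ(f)\neq\emptyset$'' and the closure of the generating family under finite unions via $f^2+g^2$) than the paper does.
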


\begin{proof}
It follows from the proof of  Theorem \ref{17}, first part that $I\subseteq C_{\mathcal{P}[I]}(X)$. Since each $f\neq 0$ in $I$ is a divisor of zero, it follows that $int_XZ(f)$ is nonempty proper open subset of $X$. Consequently for any such $f, \cl_X(X\setminus Z(f))$ is a nonempty proper closed subset of $X$ and therefore $X\notin \mathcal{P}[I]$. It follows that $1\notin C_{\mathcal{P}[I]}(X)$.
\end{proof}
\bibliographystyle{plain}

\begin{thebibliography}{6}

\smallskip

\bibitem{AG} S. K. Acharyya and S. K. Ghosh, \emph{Functions in $C(X)$ with support lying on a class of subsets of $X$}, Topology Proc. {\textbf 35} (2010), 127--148.

\smallskip

\bibitem{AS} S. K. Acharyya and S. K. Ghosh, \emph{A Note on Functions in $C(X)$ with Support Lying on an Ideal of Closed Subsets of $X$}, Topology Proc. {\textbf 40} (2012), 297--301

\smallskip

\bibitem{A} F. Azarpanah, \emph{Intersection of essential ideals in $C(X)$}, Proc. Amer. Math. Soc. {\textbf 125} (7) (1997), 2149--2154.

\smallskip

\bibitem{AZ} F. Azarpanah, \emph{Essential ideals in $C(X)$}. Period. Math. Hungar. {\textbf 31} (2) (1995), 105--112.

\smallskip

\bibitem{AKA} F. Azarpanah, O. A. S. Karamzadeh, A. Rezai Aliabad, \emph{On $z^\circ$-ideals in $C(X)$}, Fundamenta Mathematicae. {\textbf 160} (1999), 15--25.

\smallskip

\bibitem{AKAL} F. Azarpanah, O. A. S. Karamzadeh, A. Rezai Aliabad, \emph{On Ideals Consisting Entirely of Zero Divisors}, Communications in Algebra. {\textbf 28} (2) (2000), 1061--1073.

\smallskip

\bibitem{AKAR} F. Azarpanah, M. Karavan, \emph{On Nonregular Ideals and $z^\circ$-ideals in $C(X)$}, Czechoslovak Math. J. {\textbf 55} (130) (2005), No. 2, 97--407.

\smallskip

\bibitem{AA} Aliabad, F. Azarpanah, M. Paimann, \emph{$z$-ideals and $z^\circ$-ideals in the Factor Rings of $C(X)$}, Bulletin of the Iranian Mathematical Society {\textbf 36} (1) (2010), 211--226.

\smallskip

\bibitem{DA} D. De, S. K. Acharyya, \emph{Characterization of Function Rings Between $C^*(X)$ and $C(X)$} Kyungpook Math. J. {\textbf 46}, (2006) No. 4, 503--507.

\smallskip

\bibitem{DO} J. M. Dominguez, J. Gomez, M.A. Mulera, \emph{Intermediate Algebras Between $C^*(X)$ and $C(X)$ as Rings of Fractions of $C^*(X)$} Topology and its Applications {\textbf 77}, (1997), 115--130.

\smallskip
\bibitem{G} L. Gillman and M. Jerison, \emph{Rings of Continuous Functions}. New York: Van Nostrand Reinhold Co., 1960.

\smallskip

\bibitem{KR} O.A.S. Karamzadeh and M. Rostami, \emph{On the intrinsic topology and some related ideals of $C(X)$}, Proc. Amer. Math. Soc. {\textbf 93} (1) (1985), 179--184.

\smallskip

\bibitem{M} Mark Mandelkar, \emph{Supports of continuous functions},Trans. Amer. Math. Soc. {\textbf 156} (1971), 73--83.

\smallskip
\bibitem{L1977}
R. Levy,
Almost $p$-spaces, Canad. J. Math, {\textbf 29}, (1977) 284--288.

\smallskip
\bibitem{M1989}
G. Mason,
Prime ideals and quotient rings of reduced rings, Math. Japon, {\textbf 34} (1989), 941--956. 

\smallskip
\bibitem{MSW2017} Murray, J.Sack, S. Watson, \emph{$P$-spaces and intermediate rings of continuous functions}, Rocky Mountain J. Math. (To appear)
\smallskip

\smallskip

\bibitem{SW2014} J. Sack and S. Watson, \emph{$C$ and $C^*$ among intermediate rings}, 43(2014), 69--82

\smallskip

\bibitem{SW2015} J. Sack and S. Watson, \emph{Characterizing $C(X)$ among intermediate $C$-rings on $X$ }, Topology Proceedings. 45(2015), 301--313

\smallskip

\bibitem{T} A. Taherifar, \emph{Some generalizations and unifications of $C_K(X)$, $C_\psi (X)$, $C_\infty (X)$}, Quest. Math. , 38(2015), 793--804. 

\end{thebibliography}

\end{document}